\definecolor{verylight}{gray}{0.97}
\definecolor{light}{gray}{0.9}
\definecolor{medium}{gray}{0.85}
\definecolor{dark}{gray}{0.6}
\def\NZQ{\Bbb}               
\def\NN{{\NZQ N}}
\def\frk{\frak}               
\def\Phi{{\frk n}}
\def\Phi{{\frk N}}
\def\MP{{\mathcal P}}
\def\MT{{\mathcal T}}
\def\Cc{{\mathcal C}}
\def\Pc{{\mathcal P}}
\def\opn#1#2{\def#1{\operatorname{#2}}} 
\opn\chara{char} \opn\length{\ell} \opn\pd{pd} \opn\rk{rk}
\opn\projdim{proj\,dim} \opn\injdim{inj\,dim} \opn\rank{rank}
\opn\depth{depth} \opn\grade{grade} \opn\height{height}
\opn\embdim{emb\,dim} \opn\codim{codim}
\opn\Tr{Tr} \opn\bigrank{big\,rank}
\opn\superheight{superheight}\opn\lcm{lcm}
\opn\trdeg{tr\,deg}
\opn\reg{reg} \opn\lreg{lreg} \opn\ini{in} \opn\lpd{lpd}
\opn\size{size}\opn\bigsize{bigsize}
\opn\cosize{cosize}\opn\bigcosize{bigcosize}
\opn\sdepth{sdepth}\opn\sreg{sreg}
\opn\link{link}\opn\fdepth{fdepth}
\opn\div{div} \opn\Div{Div} \opn\cl{cl} \opn\Cl{Cl}
\let\epsilon\varepsilon
\let\phi=\varphi
\let\kappa=\varkappa
\opn\Spec{Spec} \opn\Supp{Supp} \opn\supp{supp} \opn\Sing{Sing}
\opn\Ass{Ass} \opn\Min{Min}\opn\Mon{Mon} \opn\dstab{dstab} \opn\astab{astab}
\opn\Syz{Syz}
\opn\Ann{Ann} \opn\Rad{Rad} \opn\Soc{Soc}
\opn\Im{Im} \opn\Ker{Ker} \opn\Coker{Coker} \opn\Am{Am}
\opn\Hom{Hom} \opn\Tor{Tor} \opn\Ext{Ext} \opn\End{End}
\opn\Aut{Aut} \opn\id{id}
\opn\nat{nat}
\opn\pff{pf}
\opn\Pf{Pf} \opn\GL{GL} \opn\SL{SL} \opn\mod{mod} \opn\ord{ord}
\opn\Gin{Gin} \opn\Hilb{Hilb}\opn\sort{sort}
\opn\initial{init}
\opn\ende{end}
\opn\height{height}
\opn\type{type}
\opn\set{set}
\opn\Rook{Rook}
\opn\aff{aff} \opn\con{conv} \opn\relint{relint} \opn\st{st}
\opn\lk{lk} \opn\cn{cn} \opn\core{core} \opn\vol{vol}
\opn\link{link} \opn\star{star}\opn\lex{lex}
\opn\gr{gr}
\def\pot#1#2{#1[\kern-0.28ex[#2]\kern-0.28ex]}
\opn\dirlim{\underrightarrow{\lim}}
\opn\inivlim{\underleftarrow{\lim}}
\let\iso=\cong
\let\to=\rightarrow
\def\Implies{\ifmmode\Longrightarrow \else
        \unskip${}\Longrightarrow{}$\ignorespaces\fi}
\def\implies{\ifmmode\Rightarrow \else
        \unskip${}\Rightarrow{}$\ignorespaces\fi}
\def\iff{\ifmmode\Longleftrightarrow \else
        \unskip${}\Longleftrightarrow{}$\ignorespaces\fi}
 \theoremstyle{plain}
\newtheorem{Theorem}{Theorem}[section]
 \newtheorem{Lemma}[Theorem]{Lemma}
 \newtheorem{Corollary}[Theorem]{Corollary}
 \newtheorem{Proposition}[Theorem]{Proposition}
 \theoremstyle{definition}
 \newtheorem{Observation}[Theorem]{Observation}
 \newtheorem{Example}[Theorem]{Example}
\let\epsilon\varepsilon
\let\kappa=\varkappa
\opn\dis{dis}
\def\pnt{{\raise0.5mm\hbox{\large\bf.}}}
\opn\Lex{Lex}
\newcommand{\rad}{1.5 pt}
\begin{document}
\title{Regularity and Gorenstein property of the $L$-convex Polyominoes}
\author {Viviana Ene, J\"urgen Herzog,  Ayesha Asloob Qureshi, Francesco Romeo}
\address{Viviana Ene, Faculty of Mathematics and Computer Science, Ovidius University, Bd.\ Mamaia 124,  900527 Constanta, Romania}  \email{vivian@univ-ovidius.ro}
\address{J\"urgen Herzog, Fachbereich Mathematik, Universit\"at Duisburg-Essen, Fakult\"at f\"ur Mathematik, 45117 Essen, Germany} \email{juergen.herzog@uni-essen.de}
\address{Ayesha Asloob Qureshi, Sabanc\i \; University, Faculty of Engineering and Natural Sciences, Orta Mahalle, Tuzla 34956, Istanbul, Turkey}\email{aqureshi@sabanciuniv.edu}
\address{Francesco Romeo, University of Trento, Department of Mathematics, via Sommarive, 14, 38123 Povo (Trento), Italy}\email{francesco.romeo-3@unitn.it}


\begin{abstract}
We study the coordinate ring of an $L$-convex polyomino, determine its regularity  in terms of the maximal number of rooks that can be placed in the polyomino. We also characterize the Gorenstein $L$-convex polyominoes and those which are Gorenstein on the punctured spectrum, and compute the Cohen--Macaulay type of any $L$-convex polyomino in terms of the maximal rectangles covering it. 
\end{abstract}

\thanks{This work was supported by The Scientific and Technological Research Council of Turkey - TUBITAK (Grant No: 118F169), and by the Doctoral School in Mathematics of the University of Trento.}
\subjclass[2010]{05E40, 13C14, 13D02}
\keywords{}

\maketitle
\section*{Introduction}

A classical subject of commutative algebra and algebraic geometry 
is the study of the ideal of $t$-minors and related ideals of an $m\times n$-matrix $X=(x_{ij})$ of indeterminates, see for example the lecture notes  \cite{BV} and its references to original articles. Several years after the appearance of the these lecture notes, a new aspect of the theory was introduced by considering Gr\"obner bases of determinantal ideals. These studies were initiated by the articles \cite{Na}, \cite{CGG} and \cite{Stu}. 

More generally and motivated by geometric applications, ideals  of $t$-minors of $2$-sided ladders have been studied, see \cite{CH},\cite{Co1},\cite{Co2} and \cite{Gor}.  For the case of $2$-minors,  these classes of ideals may be considered as special cases of the ideal $I_{\Pc}$ of inner $2$-minors of a polyominoe $\Pc$.  Ideals of this kind where first introduced and studied by Qureshi~\cite{Q}. For a field $K$, the $K$-algebra $K[\Pc]$ whose relations are given by $I_\Pc$ is called the coordinate ring of $\Pc$.  

Roughly speaking, a polyomino  is a plane figure  obtained by joining squares of equal sizes along their edges. For the precise definition we refer to the first section of this paper. A very nice introduction to the combinatorics of polyominoes and tilings is given in the monograph \cite{Tu}.

One of the most challenging problems in the algebraic theory of polyminoes is the classification  of  the  polyminoes $\Pc$ whose coordinate ring $K[\Pc]$ is a domain. It has been shown in \cite{HM} and \cite{QShShi} that this is the case if the polyomino is simply connected. In a more recent paper by Mascia, Rinaldo and  Romeo \cite{MRR}, it is shown that if  $K[\Pc]$ is a domain then it should have   no zig-zag walks, and they conjecture that this is also a sufficient condition for $K[\Pc]$ to be a domain. They verified  this conjecture  computationally for polyominoes of rank $\leq 14$. It is
expected that $K[\Pc]$ is always reduced.

Additional  structural results on $K[\Pc]$ for special classes of polyominoes were already shown in  Qureshi's article \cite{Q}. There she proved that $K[\Pc]$ is a Cohen--Macaulay normal domain when $\Pc$ is a convex polyomino, and characterized all stack polyominoes for which $K[\Pc]$ is Gorenstein by computing the  class group of this algebra. 

In the present paper we focus on so-called $L$-polyominoes. This is a particularly nice class of convex polyominoes which is distingushed  by the property that any two cells  of the polyomino can be connected by a path of cells with at most one change of directions. The combinatorics of this class of polyominoes is described the  paper \cite{CR}
and \cite{CFRR} by Castiglione et al. In Section 1 we recall some of the remarkable properties of $L$-polyominoes referring to the above mentioned papers. In particular, if $\Pc$ is an $L$-convex polyomino,  there is a natural bipartite graph $F_\Pc$ whose edges correspond to the cells of $\Pc$. By using this correspondence, we show in Proposition~\ref{prop:Ferrer} that there exists a polyomino $\Pc^*$ which is a Ferrer diagram and such that the bipartite graphs $F_\Pc$ and $F_{\Pc^*}$ are isomorphic. We call $\Pc^*$ the Ferrer diagram projected by $\Pc$.  Similarly there exists a bipartite graph $G_\Pc$ whose edges correspond to the coordinates of the  vertices of $\Pc$.  By using the intimate relationship between $F_\Pc$  and $G_\Pc$ it can be shown that $G_\Pc$ and $G_\Pc^*$ are isomorphic as well, see Corollary~\ref{cor:ferrer}. The crucial observation  which then follows from these considerations is the result (Theorem~\ref{thm:K[P]}) that $K[\Pc]$ and  $K[\Pc^*]$ are isomorphic as standard graded $K$-algebras.  Therefore  all algebraic invariants and properties of $K[\Pc]$ are shared by $K[\Pc^*]$. This  allows us for many arguments in our proofs to assume that $\Pc$ itself is a Ferrer diagram. Since the coordinate ring of a Ferrer diagram can be identified with the edge ring of a Ferrer graph, results of Corso and Nagel  \cite{CN} can be used  to compute the Castelnuovo-Mumford  regularity of $K[\Pc]$, denoted by $\reg(K[\Pc])$. It turns out that $\reg(K[\Pc])$ has a very nice combinatorial interpretation. Namely, for an $L$-convex polyomino,  $\reg(K[\Pc])$ is equal to maximal number of non-attacking rooks that can be placed on $\Pc$, as shown in Theorem~\ref{thm:rookreg}. This is the main result of Section 2. 

In Section~3 we study the Gorenstein property of $L$-convex polyominoes. We first observe that if we remove the rectangle of maximal width from $\Pc$, then the result is again an $L$-convex polyomino. Repeating this process we obtain a finite sequence of $L$-convex polyominoes, which we call the derived sequence of $\Pc$.  In Theorem~\ref{gorenstein} we then  shown that $K[\Pc]$ is Gorenstein if and only if the bounding boxes of the derived sequence of $L$-convex polyominoes of $\Pc$ are all squares. For the proof we use again that $K[\Pc] \iso K[\Pc^*]$, and the characterization of Gorenstein stack polyominoes given by Qureshi in \cite{Q}. In addition, under the assumption  $K[\Pc]$ is  not Gorenstein,  we show in Theorem~\ref{gorenstein} that $K[\Pc]$ is Gorenstein on the punctured spectrum if and only if  $\Pc$ is a rectangle, but not a square. Here we use that the coordinate ring of a a Ferrer diagram may be viewed as a Hibi ring. Then we may apply a recent result of Herzog et al \cite{HMP} which characterizes the Hibi rings which are Gorenstein on the punctured spectrum. 

Finally,  in Section~4 we compute the Cohen--Macaulay type of $K[\Pc]$ for  an $L$-convex polyomino $\Pc$. Again we use the fact that $K[\Pc^*]$ may be viewed as a Hibi ring (of a suitable poset $Q$). The number of generators of  the canonical module of $K[\Pc^*]$, which by definiton is the Cohen--Macaulay type, is described by Miyazaki \cite{Mi} (based on  results of Stanley \cite{St} and Hibi \cite{Hi}).  It is  the number of minimal  strictly order reversing maps on $Q$. Somewhat tedious counting arguments then  provide us in Theorem~\ref{thm:type} with the desired formula.

\section{Some combinatorics of $L$-convex polyominoes}
\subsection{Polyominoes}

In this subsection we recall definitions and notation about polyominoes. If $a = (i, j), b = (k, \ell) \in \NN^2$, with $i	\leq k$ and $j\leq\ell$, the set $[a, b]=\{(r,s) \in \NN^2 : i\leq r \leq k \text{ and } j \leq s \leq \ell\}$ is called an \textit{interval} of $\NN^2$. If $i<k$ and $j < \ell$, $[a,b]$ is called a \textit{proper interval}, and the elements $a,b,c,d$ are called corners of $[a,b]$, where $c=(i,\ell)$ and $d=(k,j)$. In particular, $a$ and $b$ are called the \textit{diagonal corners} whereas  $c$ and $d$ are called the \textit{anti-diagonal corners} of $[a,b]$. The corner $a$ (resp. $c$) is also called the lower left (resp. upper) corner of $[a,b]$, and $d$ (resp. $b$) is the right lower (resp. upper) corner of $[a,b]$.
A proper interval of the form $C = [a, a + (1, 1)]$ is called a \textit{cell}. Its vertices $V(C)$ are $a, a+(1,0), a+(0,1), a+(1,1)$ and its edges $E(C)$ are
\[
 \{a,a+(1,0)\}, \{a,a+(0,1)\},\{a+(1,0),a+(1,1)\},\{a+(0,1),a+(1,1)\}.
\]
Let $\MP$ be a finite collection of cells of $\NN^2$, and let $C$ and $D$ be two cells of $\MP$. Then $C$ and $D$ are said to be \textit{connected}, if there is a sequence of cells $C = C_1,\ldots, C_m = D$ of $\MP$ such that $C_i$ and $C_{i+1}$ have a common edge for all $i = 1,\ldots, m - 1$. In addition, if $C_i \neq C_j$ for all $i \neq j$, then $C_1,\dots, C_m$ is called a \textit{path} (connecting $C$ and $D$). A collection of cells $\MP$ is called a \textit{polyomino} if any two cells of $\MP$ are connected. We denote by $V(\MP)=\cup _{C\in \MP} V(C)$ the vertex set of $\MP$. A polyomino $\Pc'$ whose cells belong to $ \Pc$ is called a \emph{subpolyomino} of $\Pc$.

A polyomino $\MP$ is called \emph{row convex} if for any two cells with lower left corners $a=(i,j)$ and $b=(k,j)$, with $k>i$, it follows that all cells with lower left corners $(l,j)$ with $i\leq l \leq k$ are cells of $\MP$. Similarly,  $\MP$ is called \emph{column convex} if for any two cells with lower left corners $a=(i,j)$ and $b=(i,k)$, with $k>j$, it follows that all cells with lower left corners $(i,l)$ with $j\leq l \leq k$ are cells of $\MP$. If a polyomino $\Pc$ is simultaneously row and column convex, it is called just \emph{convex}.

Each proper interval  $[(i,j),(k,l)] $ in $\NN^2$ can be identified as a polyomino and it is referred to as {\em rectangular} polyomino. 
A rectangular subpolyomino $\Pc'$ of $\Pc$ is called \emph{maximal} if there is no rectangular subpolyomino $\Pc''$ of $\Pc$
that properly contains $\Pc'$.
Given a polyomino $\Pc$, the rectangle that contains $\Pc$ and has the smallest size with this property is called \emph{bounding box} of $\Pc$. After a shift of coordinates, we may assume that the bounding box is $[(0,0),(m,n)]$ for some $m,n \in \mathbb{N}$. In this case, the width of $\Pc$, denoted by $w(\Pc)$ is $m$. Similarly, the height of $\Pc$, denoted by $h(\Pc)$ is $n$.

Moreover, an interval $[a,b]$ with $a = (i,j)$ and $b = (k, \ell)$ is called a \textit{horizontal edge interval} of $\MP$ if $j =\ell$ and the sets $\{(r, j), (r+1, j)\}$ for $r = i, \dots, k-1$ are edges of cells of $\Pc$. If a horizontal edge interval of $\Pc$ is not
strictly contained in any other horizontal edge interval of $\Pc$, then we call it a \textit{maximal horizontal edge interval}. Similarly one defines vertical edge intervals and maximal vertical edge intervals of $\MP$.

\subsection{$L$-convex polyominoes}\label{subs:Lconv}


Let $\mathcal{C}:C_1, C_2, \ldots, C_m$ be a path of cells and $(i_k, j_k)$ be the lower left corner of $C_k$ for $1 \leq k \leq m$. Then $\Cc$ has a change of direction at $C_k$ for some $2 \leq k \leq m-1$ if $i_{k-1}\neq i_{k+1}$ and $j_{k-1} \neq j_{k+1}$.

A convex polyomino $\Pc$ is called $k$-convex if any two cells in $\Pc$ can be connected by a path of cells in $\Pc$ with at most $k$ change of directions. The $1$-convex polyominoes are simply called $L$-convex polyomino.  
The next lemma gives information about the maximal rectangles of an $L$-convex polyomino \cite{CR}. The maximal rectangles of the polyomino in Figure \ref{fig:maxrect} are of sizes $7 \times 2$, $4\times 5$, $3\times 6$, $2 \times 7$ and $1 \times 10$.
\begin{Lemma}\label{thm:maxrect2}
A maximal rectangle of an $L$-convex polyomino $\Pc$ has a unique occurrence in $\Pc$.
\end{Lemma}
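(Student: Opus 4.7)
The plan is to argue by contradiction. Suppose $R_1\neq R_2$ are two maximal rectangles of $\Pc$ of the same size $p\times q$, with lower-left corners $(a_1,b_1)$ and $(a_2,b_2)$ respectively. I first dispose of the aligned configurations. If $a_1=a_2$ then $b_1\neq b_2$, and I may take $b_1<b_2$; for every column index $x$ with $a_1\le x<a_1+p$ the cells $(x,b_1)$ and $(x,b_2)$ both lie in $\Pc$, so column convexity forces the rectangle $[(a_1,b_1),(a_1+p,b_2+q)]$ into $\Pc$, strictly containing $R_1$ and contradicting its maximality. The case $b_1=b_2$ is symmetric via row convexity. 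Hence I may assume $a_1\neq a_2$ and $b_1\neq b_2$, and after a vertical reflection if necessary, $a_1<a_2$ and $b_1<b_2$.

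In this main case I apply the defining $L$-convex property to the diagonally opposite corner cells $C=(a_1,b_1)\in R_1$ and $D=(a_2+p-1,b_2+q-1)\in R_2$. Since they differ in both coordinates, any connecting path with at most one change of direction must be one of the two $L$-shaped cell-paths between them, and by hypothesis at least one of the two lies entirely in $\Pc$. Suppose the horizontal-then-vertical path is the one in $\Pc$; then the full horizontal strip $(x,b_1)$ for $a_1\le x\le a_2+p-1$ and the full vertical strip $(a_2+p-1,y)$ for $b_1\le y\le b_2+q-1$ are contained in $\Pc$. In particular the cell $(a_2+p-1,b_1+q-1)$ is in $\Pc$, and combined with the top row of $R_1$, row convexity fills row $b_1+q-1$ from column $a_1$ to column $a_2+p-1$. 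Then column convexity, applied column-by-column with anchors in rows $b_1$ and $b_1+q-1$, places the whole rectangle $[(a_1,b_1),(a_2+p,b_1+q)]$ inside $\Pc$. Since $a_2>a_1$, this rectangle strictly contains $R_1$, contradicting maximality.

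The alternative vertical-then-horizontal $L$-path is dispatched symmetrically: the analogous inflation, using the cell $(a_1,b_2+q-1)$ and the bottom row of $R_2$, produces the rectangle $[(a_1,b_2),(a_2+p,b_2+q)]\subset\Pc$, which strictly contains $R_2$ and contradicts its maximality. Either subcase yields a contradiction, forcing $R_1=R_2$.

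The delicate point I expect is the ``convex inflation'' step: one has to verify carefully that the single $L$-path supplied by $L$-convexity, together with the boundary rows and columns of $R_1$ and $R_2$, provides two anchor cells in every intermediate column (or row), so that the HV-convexity of $\Pc$ can be invoked to fill in a full rectangle strictly larger than one of the $R_i$. The rest of the proof is a routine case split that is made clean by the initial reduction to $a_1<a_2$ and $b_1<b_2$.
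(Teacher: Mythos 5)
Your argument is correct, and it is worth noting at the outset that the paper itself offers no proof of this lemma: it is quoted from Castiglione--Restivo \cite{CR}, so there is no internal argument to compare against. Your proof is a clean, self-contained derivation from the definitions. The reduction to the aligned cases via row/column convexity is fine (though in the case $a_1=a_2$, $b_1<b_2$ the second anchor you should name is the top cell $(x,b_2+q-1)$ of $R_2$'s column rather than $(x,b_2)$, since the inflated rectangle reaches up to height $b_2+q$ --- a cosmetic fix). The main case is also sound: with the paper's definition of a change of direction, distinctness of the cells in a path forces each straight run to be monotone, so a path with at most one turn between two cells differing in both coordinates is indeed one of the two monotone $L$-paths, and your ``convex inflation'' correctly supplies two anchor cells in every intermediate column (rows $b_1$ and $b_1+q-1$ in the first subcase, rows $b_2$ and $b_2+q-1$ in the second), so that HV-convexity fills a $q$-tall rectangle of width $a_2+p-a_1>p$ strictly containing $R_1$ (resp.\ $R_2$). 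What your approach buys is independence from the cited combinatorial literature, at the cost of the case analysis; the paper's route buys brevity by outsourcing the statement entirely.
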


\begin{figure}[H]
  \centering
  \begin{subfigure}{0.5\textwidth}
  \centering
    \resizebox{0.6\textwidth}{!}{
  \begin{tikzpicture}

\draw[thick] (0,4) --  (0,6);
\draw[thick] (1,2) --  (1,7);
\draw[thick] (2,1) --  (2,8);
\draw[thick] (3,0) --  (3,10);
\draw[thick] (4,0) --  (4,10);
\draw[thick] (5,1) --  (5,7);
\draw[thick] (6,4) --  (6,6);
\draw[thick] (7,4) --  (7,6);

\draw[thick] (3,0) --  (4,0);
\draw[thick] (2,1) --  (5,1);
\draw[thick] (1,2) --  (5,2);
\draw[thick] (1,3) --  (5,3);
\draw[thick] (0,4) --  (7,4);
\draw[thick] (0,5) --  (7,5);
\draw[thick] (0,6) --  (7,6);
\draw[thick] (1,7) --  (5,7);
\draw[thick] (2,8) --  (4,8);
\draw[thick] (3,9) --  (4,9);
\draw[thick] (3,10) --  (4,10);


   \end{tikzpicture}}
   \caption{An $L$-convex polyomino $\Pc$.}\label{fig: lcon}
   \end{subfigure}%
\begin{subfigure}{0.5\textwidth}
  \centering
    \resizebox{0.6\textwidth}{!}{
  \begin{tikzpicture}

\draw[thick] (0,4) --  (0,6);
\draw[thick] (1,2) --  (1,4);\draw[loosely dotted] (1,4) --  (1,6);\draw[thick] (1,6) --  (1,7);
\draw[thick] (2,1) --  (2,2);\draw[loosely dotted] (2,2) --  (2,7);\draw[thick] (2,7) --  (2,8);
\draw[thick] (3,0) --  (3,1);\draw[loosely dotted] (3,1) --  (3,8);\draw[thick] (3,8) --  (3,10);
\draw[thick] (4,0) --  (4,1);\draw[loosely dotted] (4,1) --  (4,7);\draw[thick] (4,7) --  (4,10);
\draw[thick] (5,1) --  (5,4);\draw[loosely dotted] (5,4) --  (5,6);\draw[thick] (5,6) --  (5,7);
\draw[thick] (7,4) --  (7,6);

\draw[thick] (3,0) --  (4,0);
\draw[thick] (2,1) --  (5,1);
\draw[thick] (1,2) --  (5,2);

\draw[thick] (0,4) --  (7,4);

\draw[thick] (0,6) --  (7,6);
\draw[thick] (1,7) --  (5,7);
\draw[thick] (2,8) --  (4,8);

\draw[thick] (3,10) --  (4,10);


   \end{tikzpicture}}
   \caption{The maximal rectangles of $\Pc$.}
   \end{subfigure}
   \caption{The maximal rectangles of $\Pc$ }\label{fig:maxrect}
\end{figure}
As a consequence of Lemma~\ref{thm:maxrect2} we have that, given an $L$-convex polyomino $\Pc$, there is a unique maximal rectangle $R_w$ such that $w(\Pc)=w(R_w)$ and a unique maximal interval $R_h$ such that $h(\Pc)=h(R_h)$.

\subsection{The bipartite graphs associated to polyominoes}
Let $\Pc$ be a convex polyomino with bounding box $[(0,0),(m,n)]$. In $\Pc$ there are $n$ rows of cells, numbered increasingly from the top to the bottom, and $m$ columns of cells, numbered increasingly from the left to the right. The cell in the intersection of $i$-th row and $j$-th column can be labelled as  $C_{ij}$. Therefore, we can attach a bipartite graph $F_{\Pc}$ to the polyomino $\Pc$ in the following way. Let $V(F_{\Pc})= \{X_{1},\ldots,X_{m}\} \sqcup \{Y_{1},\ldots,Y_{n}\}$ and
\[
E(F_{\Pc})=\{\{X_i,Y_j\} : C_{ij} \mbox{ is a cell of } \Pc\}.
\]
For all  $1\leq i\leq n$, we define the $i$-th horizontal projection of $\Pc$ as the number of cells in the $i$-th row, and denote it by $h_i$. Similarly, for all $1\leq j\leq m$, we define the $j$-th vertical projection of $\Pc$ as the number of cells in the $j$-th column and denote it by $v_j$. Note that $h_i=\deg Y_i$ and $v_j=\deg X_j$ in the graph $F_{\Pc}$. In the sequel, we will refer to the vector $H_{\Pc}=(h_1, h_2, \ldots, h_{n})$ as the horizontal projections of $\Pc$ and $V_{\Pc}=(v_1, v_2, \ldots, v_{m})$ as the vertical projection of $\Pc$. For an $L$-convex polyomino one has

\begin{Theorem}[{[1,Lemma 1,2,3 ]}] \label{thm:proj}
Let $\Pc$ be an $L$-convex polyomino, then:
\begin{enumerate}
\item[{\em (a)}]$\Pc$ is uniquely determined by $H_{\Pc}$ and $V_{\Pc}$;
\item[{\em (b)}] $H_{\Pc}$ and $V_{\Pc}$ are unimodal vectors;
\item[{\em (c)}] Let $j,j'$ be two different columns of cells of $\Pc$ such that $v_j \leq v_{j'}$. Then for each row $i$ of $\Pc$, we have $C_{ij'}\in \Pc$ if $C_{ij} \in \Pc$.
\item[{\em (d)}] Let $i,i'$ be two different rows of cells of $\Pc$ such that $h_i \leq h_{i'}$. Then for each column $j$ of $\Pc$, we have $C_{i'j}\in \Pc$ if $C_{ij} \in \Pc$.

\end{enumerate}
\end{Theorem}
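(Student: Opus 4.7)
The plan is to prove (c) first by a direct $L$-convexity argument, deduce (d) by symmetry, derive (b) from (c) together with row/column convexity, and finally obtain (a) as a corollary of (c) combined with the information recorded in $H_\Pc$.

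For (c), suppose for contradiction that $v_j \leq v_{j'}$, $C_{ij} \in \Pc$, and $C_{ij'} \notin \Pc$. Write $R_j$ for the set of rows $i'$ with $C_{i'j} \in \Pc$; by column convexity each $R_j$ is an interval. For any $i' \in R_{j'}$ (necessarily distinct from $i$), $L$-convexity supplies a path from $C_{ij}$ to $C_{i'j'}$ with at most one change of direction, and such a path has only two possible shapes: horizontal along row $i$ then vertical along column $j'$, turning at $C_{ij'}$, or vertical along column $j$ then horizontal along row $i'$, turning at $C_{i'j}$. The first is ruled out by assumption, so $C_{i'j} \in \Pc$. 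Hence $R_{j'} \subseteq R_j$, forcing $v_{j'} \leq v_j$; combined with the hypothesis this yields $v_j = v_{j'}$ and $R_j = R_{j'}$, contradicting $C_{ij'} \notin \Pc$. Part (d) follows by the same argument with the roles of rows and columns interchanged.

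For (b), assume $V_\Pc$ is not unimodal; then there are $j_1 < j_2 < j_3$ with $v_{j_2} < \min\{v_{j_1}, v_{j_3}\}$. By (c) the interval $R_{j_2}$ is strictly contained in both $R_{j_1}$ and $R_{j_3}$, so one may pick $i \in R_{j_1} \setminus R_{j_2}$ and $i' \in R_{j_3} \setminus R_{j_2}$. Row convexity together with $C_{ij_1} \in \Pc$ and $C_{ij_2} \notin \Pc$ forces every cell of row $i$ to lie in columns strictly less than $j_2$, so in particular $C_{ij_3} \notin \Pc$; symmetrically $C_{i'j_1} \notin \Pc$. But then neither of the two possible shapes of an $L$-path from $C_{ij_1}$ to $C_{i'j_3}$ can be realised in $\Pc$, contradicting $L$-convexity. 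Unimodality of $H_\Pc$ follows analogously from (d).

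For (a), by (c) the row-sets $R_j$ form a chain of nested intervals ordered by the values of $v_j$, with columns of equal $v$-value having equal row-sets. Hence for each row $i$ there is a threshold $\tau(i) = \min\{v_j : i \in R_j\}$ such that $i \in R_j$ iff $v_j \geq \tau(i)$, and the identity $h_i = |\{j : v_j \geq \tau(i)\}|$ recovers $\tau(i)$ uniquely from $V_\Pc$, since the function $t \mapsto |\{j : v_j \geq t\}|$ is strictly decreasing on the finite set of distinct values taken by $V_\Pc$. The row-sets $R_j = \{i : \tau(i) \leq v_j\}$, and hence $\Pc$ itself, are then determined by $(H_\Pc, V_\Pc)$. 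The main obstacle in the whole argument is the proof of (b), which requires a careful interplay between column convexity (which makes row-sets intervals) and row convexity (which pins down the horizontal extent of each row) in order to rule out both possible shapes of the $L$-path simultaneously.
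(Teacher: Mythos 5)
Your proof is correct, but there is nothing in the paper to compare it against: Theorem~\ref{thm:proj} is stated as an imported result, cited as ``[1, Lemma 1,2,3]'' from the Castiglione et al.\ papers \cite{CR}, \cite{CFRR}, and the paper gives no proof of its own. Your argument is a valid self-contained replacement. The engine of the whole proof is the observation in (c) that a path with at most one change of direction between $C_{ij}$ and $C_{i'j'}$, with $i\neq i'$ and $j\neq j'$, must be one of exactly two $L$-shapes, turning at $C_{ij'}$ or at $C_{i'j}$ (monotonicity of each straight segment is forced because a path cannot revisit a cell); killing one shape forces the other, which gives the nesting $R_{j'}\subseteq R_j$ and then a cardinality contradiction. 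Part (b) is handled cleanly by producing, from a non-unimodal $V_{\Pc}$, two cells for which row convexity excludes both turning cells, and (a) follows since the nested family $\{R_j\}$ is encoded by the threshold $\tau(i)=\min\{v_j : i\in R_j\}$, which is recovered from $h_i$ because $t\mapsto |\{j : v_j\geq t\}|$ is injective on the set of distinct values of $V_{\Pc}$ and $\tau(i)$ always lies in that set. Two points are left implicit but are easy to supply: in (b) one needs $i\neq i'$, which follows because $i=i'$ would put $j_1$ and $j_3$ but not $j_2$ in row $i$, contradicting row convexity; and in (c) one needs $R_{j'}\neq\emptyset$, which holds since $j'$ is a column of cells. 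With those remarks the argument is complete.
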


\begin{figure}[H]
\centering
    \resizebox{0.3\textwidth}{!}{
  \begin{tikzpicture}

\draw[thick] (-1,1) --  (-1,2);
\draw[thick] (0,1) --  (0,3);
\draw[thick] (1,0) --  (1,5);
\draw[thick] (2,0) --  (2,5);
\draw[thick] (3,0) --  (3,5);
\draw[thick] (4,1) --  (4,2);

\draw[thick] (-1,1) --  (4,1);
\draw[thick] (-1,2) --  (4,2);
\draw[thick] (0,3) --  (3,3);
\draw[thick] (1,4) --  (3,4);
\draw[thick] (1,5) --  (3,5);
\draw[thick] (1,0) --  (3,0);

\fill[fill=gray, fill opacity=0.2] (1,2) -- (2,2)-- (2,3) -- (1,3);
\fill[fill=gray, fill opacity=0.2] (0,1) -- (0,3)-- (1,3) -- (1,1);
\fill[fill=gray, fill opacity=0.2] (2,0) -- (2,4)-- (3,4) -- (3,0);
\fill[fill=gray, fill opacity=0.2] (1,3) -- (1,5)-- (2,5) -- (2,3);
\fill[fill=gray, fill opacity=0.2] (3,1) -- (3,2)-- (4,2) -- (4,1);
\fill[fill=gray, fill opacity=0.2] (1,1) -- (1,2)-- (2,2) -- (2,1);
\fill[fill=gray, fill opacity=0.2] (1,0) -- (2,0)-- (2,1) -- (1,1);
\fill[fill=gray, fill opacity=0.2] (-1,1) -- (0,1)-- (0,2) -- (-1,2);
\fill[fill=gray, fill opacity=0.2] (2,4) -- (3,4)-- (3,5) -- (2,5);

   \end{tikzpicture}}
   \caption{An L-convex polyomino with $H_\Pc=(2,2,3,5,2)$ and \\ $V_\Pc=(1,2,5,5,1)$.}\label{fig:proj}
\end{figure}
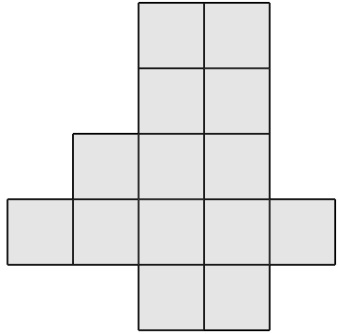
A {\em Ferrer graph} $G$ is a bipartite graph with $V(G)=\{u_1, \ldots, u_m\} \sqcup \{v_1, \ldots, v_n\}$ such that $\{u_1, v_n\}, \{u_m, v_1\} \in E(G)$ and if $\{u_i,v_j\} \in E(G)$ then $\{u_r,v_s\} \in E(G)$ for all $1 \leq r \leq i $ and for all $1 \leq s \leq j$.
Let $\Pc$ be a polyomino whose $F_{\Pc}$ is a Ferrer graph, then $\Pc$ is called a \emph{Ferrer diagram}. Note that a Ferrer diagram is a special type of stack polyomino (after a counterclockwise rotation by $90$ degrees).

\begin{figure}[H]
  \resizebox{0.3\textwidth}{!}{
  \begin{tikzpicture}

\draw (0,7)--(7,7);
\draw (0,6)--(7,6);
\draw (0,5)--(5,5);
\draw(0,4)-- (4,4);
\draw(0,3)-- (3,3);
\draw(0,2)-- (3,2);
\draw(0,1)-- (2,1);
\draw(0,0)-- (1,0);

\draw (0,0)--(0,7);
\draw (1,0)--(1,7);
\draw (2,1)--(2,7);
\draw (3,2)--(3,7);
\draw (4,4)-- (4,7);
\draw (5,5)-- (5,7);
\draw (6,6)-- (6,7);
\draw (7,6)-- (7,7);

   \end{tikzpicture}}
   \caption{Ferrer diagram}
\end{figure}
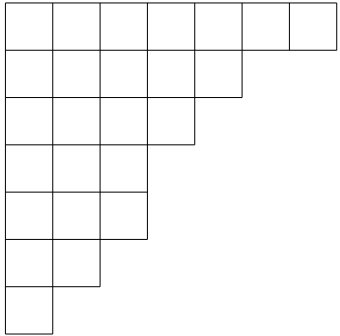
\begin{Proposition}\label{prop:Ferrer}
Let $\Pc$ be an $L$-convex polyomino. Then there exists a Ferrer diagram $\Pc^*$ such that $F_{\Pc} \iso F_{\Pc^*}$.
\end{Proposition}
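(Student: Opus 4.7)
The plan is to construct $\Pc^*$ by simultaneously reordering the rows and columns of $\Pc$ so that row-lengths and column-heights both become weakly decreasing; the isomorphism $F_\Pc\iso F_{\Pc^*}$ will then arise from the two permutations that effect this sorting. Concretely, assume $\Pc$ has bounding box $[(0,0),(m,n)]$. By Theorem~\ref{thm:proj}(b), the vectors $H_\Pc=(h_1,\dots,h_n)$ and $V_\Pc=(v_1,\dots,v_m)$ are unimodal, so one can choose permutations $\tau\in S_n$ and $\sigma\in S_m$ with $h_{\tau(1)}\geq\dots\geq h_{\tau(n)}$ and $v_{\sigma(1)}\geq\dots\geq v_{\sigma(m)}$. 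I then let $\Pc^*$ be the collection of cells $C^*_{ij}$ (with $i$ denoting the row and $j$ the column) defined by the rule $C^*_{ij}\in\Pc^*$ if and only if $C_{\tau(i),\sigma(j)}\in\Pc$.

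The heart of the argument is the verification of the Ferrer-shape condition for $\Pc^*$: if $C^*_{ij}\in\Pc^*$ and $r\leq i,\ s\leq j$, then $C^*_{rs}\in\Pc^*$. Unwinding the definition, this amounts to proving $C_{\tau(i),\sigma(j)}\in\Pc\Rightarrow C_{\tau(r),\sigma(s)}\in\Pc$. Since $r\leq i$ forces $h_{\tau(r)}\geq h_{\tau(i)}$, Theorem~\ref{thm:proj}(d) applied to rows $\tau(r)$ and $\tau(i)$ with respect to the column $\sigma(j)$ yields $C_{\tau(r),\sigma(j)}\in\Pc$; since $s\leq j$ forces $v_{\sigma(s)}\geq v_{\sigma(j)}$, Theorem~\ref{thm:proj}(c) applied to columns $\sigma(s)$ and $\sigma(j)$ at the row $\tau(r)$ then gives $C_{\tau(r),\sigma(s)}\in\Pc$.

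From this Ferrer-shape property, $\Pc^*$ is automatically connected (every cell $C^*_{ij}$ reaches $C^*_{1,1}$ along the L-shaped path through $C^*_{r,j}$ for $1\leq r\leq i$ and then $C^*_{1,s}$ for $1\leq s\leq j$, all of which lie in $\Pc^*$), and hence is a polyomino. By construction $F_{\Pc^*}$ is a Ferrer graph, so $\Pc^*$ is a Ferrer diagram, and the map $\phi:V(F_{\Pc^*})\to V(F_{\Pc})$ sending $X_j\mapsto X_{\sigma(j)}$ and $Y_i\mapsto Y_{\tau(i)}$ is a bipartite bijection carrying edges to edges by construction, providing the desired isomorphism $F_\Pc\iso F_{\Pc^*}$. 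The main technical point is the Ferrer-shape verification; what makes the simultaneous sorting succeed is that by Theorem~\ref{thm:proj}(c) and (d) the column- and row-supports of $\Pc$ are totally ordered by inclusion in any $L$-convex polyomino, so ordering them by cardinality coincides with ordering them by containment — precisely the property needed to guarantee that the reordered cells form a Young-diagram shape.
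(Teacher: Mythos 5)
Your proof is correct and follows essentially the same route as the paper's: both sort the rows and columns by their projections (equivalently, the vertices of $F_\Pc$ by degree) and invoke Theorem~\ref{thm:proj}(c) and (d) to verify the Ferrer condition for the rearranged configuration. The only difference is presentational --- you work directly with permuted cells $C^*_{ij}$ and check connectedness explicitly, while the paper relabels the vertices of $F_\Pc$ and defines $\Pc^*$ via the sorted projection vectors; the substance is identical.
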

\begin{proof}
Let $F_{\Pc}$ be the bipartite graph associated to $\Pc$, with vertex set $V(F_{\Pc})=\{X_{1},\ldots,X_{m}\}\sqcup \{Y_1,\ldots,Y_{n}\}$. We first prove that after a suitable relabelling of vertices of $F_\Pc$, it can be viewed as a Ferrer graph.
Let $T_{1}, T_{2}, \ldots, T_{m}$ and $U_{1}, U_{2}, \ldots, U_{n}$ be the relabelling of the vertices of $F_{\Pc}$ such that $\deg T_1\geq \deg T_2 \geq \cdots \geq \deg T_{m}$ and $\deg U_1\geq \deg U_2 \geq \cdots \geq \deg U_{n}$. We set  $v_{i}^*=\deg T_i$ for $1 \leq i \leq m$ and $h_j^*=\deg U_j$ for $1\leq j\leq n$.

Then $v^*_{1}=n$ and  $h^*_{1}=m$ which implies that $\{T_{1},U_{n}\}, \{T_{m},U_{1}\}\in E(F_{\Pc})$.
Furthermore, let $\{T_{k},U_{l}\}\in E(F_{\Pc})$ for some $1 \leq k \leq m$ and $1 \leq l \leq n$. Then for all $1\leq r\leq k$ and $1\leq s \leq l$, we have $v^*_{k} \leq v^*_{r}$ and $h^*_{l} \leq h^*_{s}$. Therefore, by Theorem \ref{thm:proj}.(c), we see  that $\{T_{r},U_{s}\}\in E(F_{\Pc})$  for all $1\leq r\leq k$ and $1\leq s \leq l$.

Hence $F_{\Pc}$ is a Ferrer graph up to relabelling. Let $\Pc^*$ be the unique polyomino with horizontal and vertical projections $H_{\Pc^*}=(h^*_{1}, h^*_{2}, \ldots, h^*_{n})$ and $V_{\Pc^*}=(v^*_{1}, v^*_{2}, \ldots, v^*_{m})$, then $\Pc^*$ is a Ferrer diagram and $F_{\Pc} \iso F_{\Pc^*}$.
\end{proof}
From the proof of the above proposition, one sees that given an $L$-convex polyomino $\Pc$, the Ferrer diagram $\Pc^*$ such that $F_{\Pc} \iso F_{\Pc^*}$ is uniquely determined. We refer to $\Pc^*$ as the Ferrer diagram projected by $\Pc$.

  \begin{figure}[H]
  \centering
  \begin{subfigure}{0.455\textwidth}
   \centering
    \resizebox{0.5\textwidth}{!}{
  \begin{tikzpicture}
\draw[thick] (-1,1) --  (-1,2);
\draw[thick] (0,1) --  (0,3);
\draw[thick] (1,0) --  (1,5);
\draw[thick] (2,0) --  (2,5);
\draw[thick] (3,0) --  (3,4);
\draw[thick] (4,1) --  (4,2);

\draw[thick] (-1,1) --  (4,1);
\draw[thick] (-1,2) --  (4,2);
\draw[thick] (0,3) --  (3,3);
\draw[thick] (1,4) --  (3,4);
\draw[thick] (1,5) --  (2,5);
\draw[thick] (1,0) --  (3,0);

\fill[fill=gray, fill opacity=0.2] (1,2) -- (2,2)-- (2,3) -- (1,3);
\fill[fill=gray, fill opacity=0.2] (0,1) -- (0,3)-- (1,3) -- (1,1);
\fill[fill=gray, fill opacity=0.2] (2,0) -- (2,4)-- (3,4) -- (3,0);
\fill[fill=gray, fill opacity=0.2] (1,3) -- (1,5)-- (2,5) -- (2,3);
\fill[fill=gray, fill opacity=0.2] (3,1) -- (3,2)-- (4,2) -- (4,1);
\fill[fill=gray, fill opacity=0.2] (1,1) -- (1,2)-- (2,2) -- (2,1);
\fill[fill=gray, fill opacity=0.2] (1,0) -- (2,0)-- (2,1) -- (1,1);
\fill[fill=gray, fill opacity=0.2] (-1,1) -- (0,1)-- (0,2) -- (-1,2);
   \end{tikzpicture}}
   \caption{$L$-convex polyomino $\Pc$}
   \end{subfigure}%
  \begin{subfigure}{0.455\textwidth}
   \centering
    \resizebox{0.5\textwidth}{!}{
  \begin{tikzpicture}
  \draw[thick] (-1,0) --  (-1,5);
 \draw[thick] (0,0) --  (0,5);
\draw[thick] (1,1) --  (1,5);
\draw[thick] (2,3) --  (2,5);
\draw[thick] (3,4)--(3,5);
\draw[thick] (4,4)--(4,5);

\draw[thick] (-1,0)-- (0,0);
\draw[thick] (-1,1) --  (1,1);
\draw[thick] (-1,2) --  (1,2);
\draw[thick] (-1,3) --  (2,3);
\draw[thick] (-1,4) --  (4,4);
\draw[thick] (-1,5) --  (4,5);

\fill[fill=gray, fill opacity=0.2] (-1,4) -- (4,4)-- (4,5) -- (-1,5);
\fill[fill=gray, fill opacity=0.2] (-1,3) -- (2,3)-- (2,4) -- (-1,4);
\fill[fill=gray, fill opacity=0.2] (-1,1) -- (1,1)-- (1,3) -- (-1,3);
\fill[fill=gray, fill opacity=0.2] (-1,0) -- (0,0)-- (0,1) -- (-1,1);
   \end{tikzpicture}}
   \caption{The Ferrer diagram $\Pc^*$ projected by~$\Pc$}
   \end{subfigure}
   \end{figure}
Let $r(\Pc,k)$ be the number of ways of arranging $k$ non-attacking rooks in cells of $\Pc$. 
Recall that, for a graph $G$ with $n$ vertices, a $k$-matching of $G$ is the  set of $k$ pairwise disjoint edges in $G$. Let $p(G,k)$ be the number of $k$ matchings of $G$.  It is a fact, for example see \cite[page 56]{GG}, that   $r(\Pc,k)= p(F_{\Pc},k)$.  Let $r(\Pc)$ denote the maximum number of rooks that can be arranged in $\Pc$ in non-attacking position, that is $r(\Pc)=\max_{k} r(\Pc,k)$. We have the following
\begin{Lemma}\label{lem:rooks}
Let $\Pc$ be an $L$-convex polyomino and $P^*$ be the Ferrer diagram projected by $\Pc$. Then $r(\Pc,k)=r(\Pc^*,k)$. In particular, $r(\Pc)=r(\Pc^*)$.
\end{Lemma}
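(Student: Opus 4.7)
The plan is to reduce the statement to a graph-theoretic fact about matchings, exploiting the isomorphism $F_\Pc \iso F_{\Pc^*}$ established in Proposition~\ref{prop:Ferrer}. The bridge between rook arrangements and matchings has just been recalled: for any polyomino $\MQ$ and any $k$, one has $r(\MQ,k) = p(F_\MQ,k)$, where $p(F_\MQ,k)$ is the number of $k$-matchings of the bipartite graph $F_\MQ$. This equality is precisely the content of the classical bijection that sends a non-attacking rook placement $\{C_{i_1 j_1}, \dots, C_{i_k j_k}\}$ to the matching $\{\{X_{i_1},Y_{j_1}\},\dots,\{X_{i_k},Y_{j_k}\}\}$; the non-attacking condition (distinct rows, distinct columns) translates exactly into the matching condition (pairwise disjoint edges).

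Given this, the proof is essentially immediate. First I would apply the above identity to both $\Pc$ and $\Pc^*$, obtaining
\[
r(\Pc,k) \;=\; p(F_\Pc,k) \qquad \text{and} \qquad r(\Pc^*,k) \;=\; p(F_{\Pc^*},k).
\]
Next I would invoke Proposition~\ref{prop:Ferrer} to conclude that $F_\Pc$ and $F_{\Pc^*}$ are isomorphic as bipartite graphs. Since the number $p(G,k)$ of $k$-matchings is a graph invariant (any graph isomorphism induces a bijection between the sets of $k$-matchings), this forces $p(F_\Pc,k) = p(F_{\Pc^*},k)$, and combining the three equalities yields $r(\Pc,k) = r(\Pc^*,k)$.

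The second assertion follows at once by taking the maximum over $k$: since $r(\Pc,k) = r(\Pc^*,k)$ for every $k$, we get $r(\Pc) = \max_k r(\Pc,k) = \max_k r(\Pc^*,k) = r(\Pc^*)$. There is no real obstacle here; the entire content of the lemma is packaged inside Proposition~\ref{prop:Ferrer} together with the standard rook--matching correspondence. The only thing worth being explicit about, if desired, is the observation that relabelling vertices of $F_\Pc$ corresponds to reordering rows and columns of the underlying polyomino, which manifestly preserves the counts $r(\cdot,k)$.
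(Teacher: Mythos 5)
Your proof is correct and follows exactly the same route as the paper: apply the rook--matching identity $r(\cdot,k)=p(F_{\cdot},k)$ to both polyominoes, invoke Proposition~\ref{prop:Ferrer} for $F_{\Pc}\iso F_{\Pc^*}$, and conclude that the matching counts (and hence the rook counts) agree. The only difference is that you spell out the bijection behind the rook--matching correspondence, which the paper simply cites.
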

\begin{proof}
 From Proposition \ref{prop:Ferrer}, we have $F_\Pc \iso F_{\Pc^*}$ then $p(F_{\Pc},k)= p(F_{\Pc^*},k)$. Then by using the theorem on \cite[page 56]{GG}, we see that  $r(\Pc,k)=r(\Pc^*,k)$.
\end{proof}

 \begin{figure}[H]
  \centering
  \begin{subfigure}{0.5\textwidth}
   \centering
    \resizebox{0.6\textwidth}{!}{
  \begin{tikzpicture}

\draw[thick] (-1,1) --  (-1,2);
\draw[thick] (0,1) --  (0,3);
\draw[thick] (1,0) --  (1,5);
\draw[thick] (2,0) --  (2,5);
\draw[thick] (3,0) --  (3,4);
\draw[thick] (4,1) --  (4,2);

\draw[thick] (-1,1) --  (4,1);
\draw[thick] (-1,2) --  (4,2);
\draw[thick] (0,3) --  (3,3);
\draw[thick] (1,4) --  (3,4);
\draw[thick] (1,5) --  (2,5);
\draw[thick] (1,0) --  (3,0);

\fill[fill=gray, fill opacity=0.2] (1,2) -- (2,2)-- (2,3) -- (1,3);
\fill[fill=gray, fill opacity=0.2] (0,1) -- (0,3)-- (1,3) -- (1,1);
\fill[fill=gray, fill opacity=0.2] (2,0) -- (2,4)-- (3,4) -- (3,0);
\fill[fill=gray, fill opacity=0.2] (1,3) -- (1,5)-- (2,5) -- (2,3);
\fill[fill=gray, fill opacity=0.2] (3,1) -- (3,2)-- (4,2) -- (4,1);
\fill[fill=gray, fill opacity=0.2] (1,1) -- (1,2)-- (2,2) -- (2,1);
\fill[fill=gray, fill opacity=0.2] (1,0) -- (2,0)-- (2,1) -- (1,1);
\fill[fill=gray, fill opacity=0.2] (-1,1) -- (0,1)-- (0,2) -- (-1,2);

\node[inner sep=0pt] (rookl) at (-0.5,1.5)
    {\includegraphics[scale=0.6]{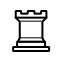}};
\node[inner sep=0pt] (rookl) at (0.5,2.5)
    {\includegraphics[scale=0.6]{rook.png}};
    \node[inner sep=0pt] (rookl) at (1.5,3.5)
    {\includegraphics[scale=0.6]{rook.png}};
    \node[inner sep=0pt] (rookl) at (2.5,0.5)
    {\includegraphics[scale=0.6]{rook.png}};
   \end{tikzpicture}}
   \end{subfigure}\hfill%
  \begin{subfigure}{0.5\textwidth}
  \centering
    \resizebox{0.6\textwidth}{!}{
  \begin{tikzpicture}
 
 \draw[thick] (-1,0) --  (-1,5);
 \draw[thick] (0,0) --  (0,5);
\draw[thick] (1,1) --  (1,5);
\draw[thick] (2,3) --  (2,5);
\draw[thick] (3,4)--(3,5);
\draw[thick] (4,4)--(4,5);

\draw[thick] (-1,0)-- (0,0);
\draw[thick] (-1,1) --  (1,1);
\draw[thick] (-1,2) --  (1,2);
\draw[thick] (-1,3) --  (2,3);
\draw[thick] (-1,4) --  (4,4);
\draw[thick] (-1,5) --  (4,5);

\fill[fill=gray, fill opacity=0.2] (-1,4) -- (4,4)-- (4,5) -- (-1,5);
\fill[fill=gray, fill opacity=0.2] (-1,3) -- (2,3)-- (2,4) -- (-1,4);
\fill[fill=gray, fill opacity=0.2] (-1,1) -- (1,1)-- (1,3) -- (-1,3);
\fill[fill=gray, fill opacity=0.2] (-1,0) -- (0,0)-- (0,1) -- (-1,1);

\node[inner sep=0pt] (rook) at (-0.5,1.5)
    {\includegraphics[scale=0.6]{rook.png}};
\node[inner sep=0pt] (rook) at (0.5,2.5)
    {\includegraphics[scale=0.6]{rook.png}};
    \node[inner sep=0pt] (rook) at (1.5,3.5)
    {\includegraphics[scale=0.6]{rook.png}};
    \node[inner sep=0pt] (rook) at (2.5,4.5)
    {\includegraphics[scale=0.6]{rook.png}};
   \end{tikzpicture}}
  
   \end{subfigure}
      \caption{Placement of rooks in non-attacking position in $\Pc$ and $\Pc^*$.}
   \end{figure}

As described in \cite[Section 2]{Q}, we can associate another bipartite graph $G_{\Pc}$ to $\Pc$ in the following way. Let $\mathcal{I}=[(0,0),(m,n)] $ be the bounding box of $\Pc$.
Since $\Pc$ is convex, in $\Pc$  there are $m+1$ maximal vertical edge intervals and $n+1$ maximal horizontal edge intervals, namely there are $m+1$ columns and $n+1$ rows of vertices. We number the rows  in an increasing order from left to right and we number the rows  in an increasing order from top to bottom. Let $H_0, \ldots, H_n$ denote the rows and $V_0, \ldots, V_m$ denote the columns of vertices of $\Pc$.
Set $V(G_{\Pc})= \{x_0, \ldots, x_m\} \sqcup \{y_0, \ldots, y_n\}$. Then $\{x_i,y_j\} \in E(G_{\Pc})$ if and only if $V_i \cap H_j \neq \emptyset$. To distinguish between $G_{\Pc}$ and $F_{\Pc}$, we refer to them as follows:
\begin{itemize}
\item The graph $F_{\Pc}$ is the graph associated to the cells of $\Pc$.
\item The graph $G_{\Pc}$ is the graph associated to the vertices of $\Pc$.
\end{itemize}
\begin{figure}[H]
   \centering
    \resizebox{0.4\textwidth}{!}{
  \begin{tikzpicture}

\draw[thick] (-1,1) --  (-1,2);
\draw[thick] (0,1) --  (0,3);
\draw[thick] (1,0) --  (1,5);
\draw[thick] (2,0) --  (2,5);
\draw[thick] (3,0) --  (3,5);
\draw[thick] (4,1) --  (4,2);

\draw[thick] (-1,1) --  (4,1);
\draw[thick] (-1,2) --  (4,2);
\draw[thick] (0,3) --  (3,3);
\draw[thick] (1,4) --  (3,4);
\draw[thick] (1,5) --  (3,5);
\draw[thick] (1,0) --  (3,0);

\fill[fill=gray, fill opacity=0.2] (1,2) -- (2,2)-- (2,3) -- (1,3);
\fill[fill=gray, fill opacity=0.2] (0,1) -- (0,3)-- (1,3) -- (1,1);
\fill[fill=gray, fill opacity=0.2] (2,0) -- (2,4)-- (3,4) -- (3,0);
\fill[fill=gray, fill opacity=0.2] (1,3) -- (1,5)-- (2,5) -- (2,3);
\fill[fill=gray, fill opacity=0.2] (3,1) -- (3,2)-- (4,2) -- (4,1);
\fill[fill=gray, fill opacity=0.2] (1,1) -- (1,2)-- (2,2) -- (2,1);
\fill[fill=gray, fill opacity=0.2] (1,0) -- (2,0)-- (2,1) -- (1,1);
\fill[fill=gray, fill opacity=0.2] (-1,1) -- (0,1)-- (0,2) -- (-1,2);
\fill[fill=gray, fill opacity=0.2] (2,4) -- (3,4)-- (3,5) -- (2,5);

\node at (-1,-0.3){$x_0$};
\node at (0,-0.3){$x_1$};
\node at (1,-0.3){$x_2$};
\node at (2,-0.3){$x_3$};
\node at (3,-0.3){$x_4$};
\node at (4,-0.3){$x_5$};

\node at (-1.5,0){$y_5$};
\node at (-1.5,1){$y_4$};
\node at (-1.5,2){$y_3$};
\node at (-1.5,3){$y_2$};
\node at (-1.5,4){$y_1$};
\node at (-1.5,5){$y_0$};

\node at (-0.5,-0.3+6){$X_1$};
\node at (0.5,-0.3+6){$X_2$};
\node at (1.5,-0.3+6){$X_3$};
\node at (2.5,-0.3+6){$X_4$};
\node at (3.5,-0.3+6){$X_5$};

\node at (-1.5+6.5,0.5){$Y_5$};
\node at (-1.5+6.5,1.5){$Y_4$};
\node at (-1.5+6.5,2.5){$Y_3$};
\node at (-1.5+6.5,3.5){$Y_2$};
\node at (-1.5+6.5,4.5){$Y_1$};
   \end{tikzpicture}}
   \caption{The two labellings on $\Pc$ of Figure \ref{fig:proj}}\label{fig : label}
\end{figure}
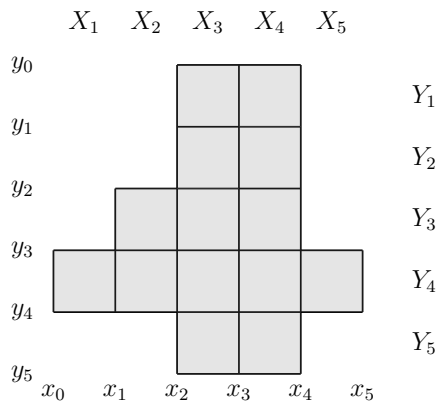
  \begin{figure}[H]
    \centering
    \begin{subfigure}{0.5\textwidth}
   \centering
    \resizebox{0.8\textwidth}{!}{
  \begin{tikzpicture}

\filldraw (-1,-0.3)  circle (\rad) node[anchor=north] {$x_0$};
\filldraw (0,-0.3) circle (\rad) node[anchor=north]{$x_1$};
\filldraw (1,-0.3)circle (\rad) node[anchor=north]{$x_2$};
\filldraw(2,-0.3)circle (\rad) node[anchor=north]{$x_3$};
\filldraw(3,-0.3)circle (\rad) node[anchor=north]{$x_4$};
\filldraw (4,-0.3)circle (\rad) node[anchor=north]{$x_5$};

\filldraw (-1,2.3)  circle (\rad) node[anchor=south] {$y_0$};
\filldraw (0,2.3) circle (\rad) node[anchor=south]{$y_1$};
\filldraw (1,2.3)circle (\rad) node[anchor=south]{$y_2$};
\filldraw(2,2.3)circle (\rad) node[anchor=south]{$y_3$};
\filldraw(3,2.3)circle (\rad) node[anchor=south]{$y_4$};
\filldraw (4,2.3)circle (\rad) node[anchor=south]{$y_5$};

\draw (-1,-0.3)--(2,2.3) ;
\draw (-1,-0.3)--(3,2.3) ;
\draw (0,-0.3)--(1,2.3) ;
\draw (0,-0.3)--(2,2.3) ;
\draw (0,-0.3)--(3,2.3) ;
\draw (1,-0.3)--(-1,2.3);
\draw (1,-0.3)--(0,2.3) ;
\draw (1,-0.3)--(1,2.3) ;
\draw (1,-0.3)--(2,2.3) ;
\draw (1,-0.3)--(3,2.3) ;
\draw (1,-0.3)--(4,2.3) ;
\draw (2,-0.3)--(-1,2.3);
\draw (2,-0.3)--(0,2.3) ;
\draw (2,-0.3)--(1,2.3) ;
\draw (2,-0.3)--(2,2.3) ;
\draw (2,-0.3)--(3,2.3) ;
\draw (2,-0.3)--(4,2.3) ;
\draw (3,-0.3)--(-1,2.3);
\draw (3,-0.3)--(0,2.3) ;
\draw (3,-0.3)--(1,2.3) ;
\draw (3,-0.3)--(2,2.3) ;
\draw (3,-0.3)--(3,2.3) ;
\draw (3,-0.3)--(4,2.3) ;
\draw (4,-0.3)--(2,2.3) ;
\draw (4,-0.3)--(3,2.3) ;
   \end{tikzpicture}}
   \caption{The bipartite graph $G_\Pc$ of the \\ polyomino $\Pc$ in Figure \ref{fig:proj}.}\label{fig : degc}
   \end{subfigure}%
      \begin{subfigure}{0.5\textwidth}

    \resizebox{0.65\textwidth}{!}{
  \begin{tikzpicture}

\filldraw (-1,-0.3)  circle (\rad) node[anchor=north] {$X_1$};
\filldraw (0,-0.3) circle (\rad) node[anchor=north]{$X_2$};
\filldraw (1,-0.3)circle (\rad) node[anchor=north]{$X_3$};
\filldraw(2,-0.3)circle (\rad) node[anchor=north]{$X_4$};
\filldraw(3,-0.3)circle (\rad) node[anchor=north]{$X_5$};

\filldraw (-1,2.3)  circle (\rad) node[anchor=south] {$Y_1$};
\filldraw (0,2.3) circle (\rad) node[anchor=south]{$Y_2$};
\filldraw (1,2.3)circle (\rad) node[anchor=south]{$Y_3$};
\filldraw(2,2.3)circle (\rad) node[anchor=south]{$Y_4$};
\filldraw(3,2.3)circle (\rad) node[anchor=south]{$Y_5$};

\draw (-1,-0.3)--(2,2.3) ;

\draw (0,-0.3)--(2,2.3) ;
\draw (0,-0.3)--(1,2.3) ;

\draw (1,-0.3)--(-1,2.3);
\draw (1,-0.3)--(0,2.3) ;
\draw (1,-0.3)--(1,2.3) ;
\draw (1,-0.3)--(2,2.3) ;
\draw (1,-0.3)--(3,2.3) ;

\draw (2,-0.3)--(-1,2.3);
\draw (2,-0.3)--(0,2.3) ;
\draw (2,-0.3)--(1,2.3) ;
\draw (2,-0.3)--(2,2.3) ;
\draw (2,-0.3)--(3,2.3) ;

\draw (3,-0.3)--(2,2.3) ;

   \end{tikzpicture}}
   \caption{The bipartite graph $F_{\Pc}$ of the \\ polyomino $\Pc$ in Figure \ref{fig:proj}.}\label{fig : degv}
   \end{subfigure}%
   \caption{}\label{fig:deg}
   \end{figure}

The relation between $F_\Pc$ and $G_\Pc$ is deducible from the following
\begin{Observation}\label{degreeprojection}
  Let $\Pc$ be an $L$-convex polyomino with bounding box $[(0,0),(m,n)]$, then the $H_{\Pc}$ has $n$ components and $V_{\Pc}$ has $m$ components. One can interpret both projections in terms of degrees of vertices of $G_{\Pc}$ in the following way:

(i) Let $V_{\Pc}=(v_1, v_2, \ldots, v_{m})$ and $H_{\Pc}=(h_1, h_2, \ldots, h_{n})$. From Theorem~\ref{thm:proj}, we know that $V_{\Pc}$ and $H_{\Pc}$ are unimodal. Let
\[
v_1\leq v_2\leq \cdots < v_i=n \geq v_{i+1} \geq \cdots \geq v_{m}
\]
for some $1 \leq i \leq m$. Then $v_j=\deg x_{j-1}-1$ for all $1 \leq j < i $ and $v_{j}=\deg x_{j}-1$ for $i \leq j \leq m$.
Similarly,  by using unimodality of $H_{\Pc}$, we get
\[
h_1\leq h_2\leq \cdots < h_i=m \geq h_{i+1} \geq \cdots \geq h_{n}
\]
for some $1 \leq i \leq m$. Then $h_j=\deg y_{j-1}-1$ for all $1 \leq j < i $ and $h_{j}=\deg y_{j}-1$ for $i \leq j \leq n$.

(ii) As a consequence of (i), if $\Pc$ is a Ferrer diagram with $V_{\Pc}=(v_1, v_2, \ldots, v_{m})$ and $H_{\Pc}=(h_1, h_2, \ldots, h_{n})$, then

\[
v_1\geq v_{2} \geq \cdots \geq v_{m},
\]
\[
h_1\geq h_2 \geq \cdots \geq h_{n}.
\]
Let $G_{\Pc}$ and $F_{\Pc}$ be the graphs associated to $\Pc$ as described above with $V(G_{\Pc})= \{x_0, \ldots, x_m\} \sqcup \{y_0, \ldots, y_n\}$ and $V(F_{\Pc})= \{X_1, \ldots, X_{m}\} \sqcup \{Y_1, \ldots, Y_{n}\}$.
Then $v_j=\deg X_j =\deg x_j-1$ for all $1 \leq j \leq m $, and $h_j=\deg Y_j= \deg y_{j}-1$ for all $1 \leq j \leq n$.
\end{Observation}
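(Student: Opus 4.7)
The plan reduces both parts of the observation to a single geometric identity
\[
|V_k| \;=\; \max(v_k,\,v_{k+1}) + 1 \qquad \text{for } 0 \leq k \leq m,
\]
with the convention $v_0 = v_{m+1} = 0$, where $|V_k|$ denotes the number of vertices in the vertex column $V_k$; an identical statement with $h$'s will handle the row side. First I interpret $\deg x_k$ geometrically: since $\Pc$ is convex, at each integer height $y \in \{0, 1, \ldots, n\}$ the horizontal slice of $\Pc$ is an interval, so each height meeting $\Pc$ contributes exactly one maximal horizontal edge interval, matching the declared count of $n+1$ rows of vertices $H_0, \ldots, H_n$. Thus, by the definition of $E(G_\Pc)$, $\deg x_k$ equals the number of heights at which the vertical line $x = k$ meets $V(\Pc)$, which is precisely $|V_k|$.

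To compute $|V_k|$, note that by column convexity the cells in column $k$ form a contiguous vertical segment and contribute $v_k + 1$ consecutive vertices to $V_k$; column $k+1$ similarly contributes $v_{k+1} + 1$ consecutive vertices. By Theorem~\ref{thm:proj}(c), whenever $v_k \leq v_{k+1}$ the row-range of cells in column $k$ is contained in that of column $k+1$, so the two vertex sets are nested and $|V_k| = v_{k+1} + 1$; the symmetric case gives $v_k + 1$. The boundary cases $k = 0, m$ are immediate since only one neighboring cell column exists, yielding $|V_k| = \max(v_k, v_{k+1}) + 1$ throughout.

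Now (i) follows from the unimodality of $V_\Pc$ given by Theorem~\ref{thm:proj}(b): for $k < i$ one has $v_k \leq v_{k+1}$, so $\deg x_k = v_{k+1} + 1$, which after the shift $j = k+1$ reads $v_j = \deg x_{j-1} - 1$ for $1 \leq j < i$; for $k \geq i$ one has $v_k \geq v_{k+1}$, so $v_k = \deg x_k - 1$ for $i \leq k \leq m$. The argument for $H_\Pc$ and the $y$-vertices is identical after exchanging the roles of rows and columns. For (ii), a Ferrer diagram has $V_\Pc$ and $H_\Pc$ weakly decreasing by the very definition of a Ferrer graph (the peak index is $i = 1$), and part (i) specializes to $v_j = \deg x_j - 1$ throughout; the identity $\deg X_j = v_j$ is immediate from the construction of $F_\Pc$, since edges at $X_j$ biject with the cells $C_{ij}$ in column $j$, of which there are exactly $v_j$.

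The main obstacle is the very first step: one must be careful that the "$n+1$ rows of vertices" really biject with the integer heights, so that $\deg x_k$ literally counts the vertices of the column $V_k$. Once this identification is pinned down by convexity, everything else reduces to bookkeeping with unimodality and Theorem~\ref{thm:proj}(c); in particular, the role of $L$-convexity (as opposed to mere convexity) enters only through Theorem~\ref{thm:proj}(c), which is precisely what forces the containment of $y$-ranges in adjacent columns when $v_k \leq v_{k+1}$.
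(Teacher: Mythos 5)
Your argument is correct, and since the paper states this result as an \emph{Observation} with no proof at all, you are supplying exactly the justification it leaves implicit: reduce everything to the identity $\deg x_k=\lvert V_k\rvert=\max(v_k,v_{k+1})+1$ (with $v_0=v_{m+1}=0$), which holds because convexity gives a single maximal horizontal edge interval per height and Theorem~\ref{thm:proj}(c) nests the row-ranges of adjacent cell columns, and then read off (i) from unimodality and (ii) from the weakly decreasing projections of a Ferrer diagram. The only blemish is a harmless off-by-one in the phrase ``for $k<i$ one has $v_k\leq v_{k+1}$'': the shift $j=k+1$ covers $1\leq j<i$ only for $k\leq i-2$, with the case $j=i$ already handled by the descending branch, so nothing is lost.
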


Now we obtain
\begin{Lemma}\label{degreeproperty}
Let $\Pc$ be an $L$-convex polyomino and $G_{\Pc}$ be its associated bipartite graph with $V(G_{\Pc})= \{x_0, \ldots, x_m\} \sqcup \{y_0, \ldots, y_n\}$. Then we have the following:
\begin{enumerate}
\item[\emph{(a)}] if $\deg x_i < \deg x_{i'}$, then $\{x_{i'}, y_j\} \in E(G_{\Pc})$ whenever $\{x_{i}, y_j\} \in E(G_{\Pc})$.
\item[\emph{(b)}] if $\deg y_j < \deg y_{j'}$, then $\{x_{i}, y_{j'}\} \in E(G_{\Pc})$ whenever $\{x_{i}, y_j\} \in E(G_{\Pc})$.
\end{enumerate}
\end{Lemma}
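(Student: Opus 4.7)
The plan is to translate the statement about the vertex-graph $G_\Pc$ into one about cells of $\Pc$, and then apply Theorem~\ref{thm:proj}(c) for part (a) and Theorem~\ref{thm:proj}(d) for part (b). Since (b) is entirely symmetric to (a) under swapping rows with columns, I will concentrate on (a).

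First I will unpack the combinatorial content of an edge of $G_\Pc$: we have $\{x_i, y_j\} \in E(G_\Pc)$ iff $V_i \cap H_j \neq \emptyset$, which means the vertex sitting at column $V_i$ and row $H_j$ lies in $V(\Pc)=\bigcup_{C \in \Pc} V(C)$. Equivalently, at least one cell $C_{r,c} \in \Pc$ has that vertex as a corner, which happens exactly when $c \in \{i, i+1\} \cap [1,m]$ and $r \in \{j, j+1\} \cap [1,n]$.

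Next, from Observation~\ref{degreeprojection}(i) together with the unimodality of $V_\Pc$ granted by Theorem~\ref{thm:proj}(b), I will extract the compact formula
\[
\deg x_k \;=\; \max(v_k, v_{k+1}) + 1, \qquad 0 \leq k \leq m,
\]
under the boundary convention $v_0 := 0$ and $v_{m+1} := 0$. Indeed, in the increasing part ($k < i_0$, where $i_0$ is the peak of $V_\Pc$) one has $v_k \leq v_{k+1}$, so the maximum equals $v_{k+1}$ and matches $\deg x_k = v_{k+1}+1$; in the decreasing part ($k \geq i_0$) one has $v_k \geq v_{k+1}$, so the maximum equals $v_k$ and matches $\deg x_k = v_k + 1$. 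The two regimes agree at the peak, and the boundary cases $k=0$ and $k=m$ are absorbed by the convention. Carefully reconciling these case-by-case formulas is the main technical point.

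Finally, assuming $\deg x_i < \deg x_{i'}$ and $\{x_i, y_j\} \in E(G_\Pc)$, I pick a witness cell $C_{r,c} \in \Pc$ as above together with an index $c^* \in \{i', i'+1\}$ attaining $\max(v_{i'}, v_{i'+1}) = \deg x_{i'} - 1$. Since $\deg x_{i'} - 1 > \deg x_i - 1 \geq 0$, we have $v_{c^*} \geq 1$, which forces $c^* \in [1,m]$. The chain
\[
v_c \;\leq\; \max(v_i, v_{i+1}) = \deg x_i - 1 \;<\; \deg x_{i'} - 1 = v_{c^*}
\]
yields $v_c \leq v_{c^*}$, so Theorem~\ref{thm:proj}(c) applied to columns $c$ and $c^*$ produces $C_{r, c^*} \in \Pc$. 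Since $c^* \in \{i', i'+1\}$ and $r \in \{j, j+1\}$, this cell has $V_{i'} \cap H_j$ as a corner, giving $\{x_{i'}, y_j\} \in E(G_\Pc)$. Part (b) follows verbatim, with rows replacing columns and Theorem~\ref{thm:proj}(d) in place of (c).
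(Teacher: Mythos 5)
Your proof is correct and follows essentially the same route as the paper: translate vertex degrees of $G_{\Pc}$ into the projections $V_{\Pc}$, $H_{\Pc}$ via Observation~\ref{degreeprojection} and then invoke Theorem~\ref{thm:proj}(c) (resp.\ (d)). You are in fact more careful than the paper's two-line argument --- your explicit formula $\deg x_k=\max(v_k,v_{k+1})+1$ and the witness-cell translation between edges of $G_{\Pc}$ and cells of $\Pc$ fill in steps the paper leaves implicit, and you correctly apply part (c) of Theorem~\ref{thm:proj} to statement (a), whereas the paper's printed proof refers to the horizontal projections and part (d) there, which appears to be a typographical slip.
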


\begin{proof}
Let $H_{\Pc}=(h_1, h_2, \ldots, h_{n})$ and $V_{\Pc}=(v_1, v_2, \ldots, v_{m})$  be  the horizontal and vertical projection of $\Pc$.\\
(a): Let $p=\deg x_i < \deg x_{i'}=q$. Then following Observation~\ref{degreeprojection}, $h_s=p-1$ and $h_t=q-1$ for some $1 \leq s \neq t \leq n$.  Then $h_s< h_t$ and the conclusion follows from Theorem~\ref{thm:proj}(d).\\
(b): Let $p=\deg y_i < \deg y_{i'}=q$. Then following Observation~\ref{degreeprojection}, $v_s=p-1$ and $v_t=q-1$ for some $1 \leq s \neq t \leq m$.  Then $v_s< v_t$ and the the conclusion follows from Theorem~\ref{thm:proj}(c).
\end{proof}
A result similar to Proposition \ref{prop:Ferrer} holds also for the graph $G_\Pc$.

\begin{Corollary}\label{cor:ferrer}
Let $\Pc$ be an $L$-convex polyomino, let $\Pc^*$ be the Ferrer diagram projected by $\Pc$. Then $G_\Pc \iso G_{\Pc^*}$.
\end{Corollary}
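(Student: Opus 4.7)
The plan is to show that, after a suitable relabelling of its vertices, $G_\Pc$ becomes a Ferrer bipartite graph (on $m+1$ and $n+1$ vertices) whose two degree sequences coincide with those of $G_{\Pc^*}$. Since a Ferrer bipartite graph is uniquely determined, up to isomorphism, by its pair of degree sequences, this will force $G_\Pc \iso G_{\Pc^*}$.

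Following the strategy of Proposition~\ref{prop:Ferrer}, I would first relabel the vertices of $G_\Pc$ as $\tilde{x}_0, \ldots, \tilde{x}_m$ and $\tilde{y}_0, \ldots, \tilde{y}_n$ in weakly decreasing order of degree on each side. Lemma~\ref{degreeproperty}(a),(b) then translate directly into the defining implication of a Ferrer graph: $\{\tilde{x}_i, \tilde{y}_j\} \in E(G_\Pc)$ forces $\{\tilde{x}_r, \tilde{y}_s\} \in E(G_\Pc)$ whenever $r \leq i$ and $s \leq j$. For the corner condition I would use the consequence of Lemma~\ref{thm:maxrect2} stated just after it: every $L$-convex polyomino has a unique maximal rectangle $R_h$ of full height $n$, and each vertex column bounding $R_h$ meets all $n+1$ horizontal vertex rows, so $\deg \tilde{x}_0 = n+1$, and hence $\{\tilde{x}_0, \tilde{y}_j\} \in E(G_\Pc)$ for all $j$; symmetrically, the existence of $R_w$ gives $\deg \tilde{y}_0 = m+1$. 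On the $\Pc^*$-side, the projections $V_{\Pc^*}, H_{\Pc^*}$ are by construction weakly decreasing, so Observation~\ref{degreeprojection}(ii) shows that $G_{\Pc^*}$ is already a Ferrer graph in its natural labelling.

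To compare the degree sequences I would combine Observation~\ref{degreeprojection}(i) with the unimodality of $V_\Pc$ to obtain the compact formula $\deg x_k = \max(v_k, v_{k+1}) + 1$ for $0 \leq k \leq m$, with the convention $v_0 = v_{m+1} = 0$, together with the analogous formula for $\deg y_\ell$. An elementary check then shows that for any unimodal vector $(v_1, \ldots, v_m)$ with decreasing rearrangement $(v_1^*, \ldots, v_m^*)$ one has, as multisets,
\[
\{\max(v_k, v_{k+1}) : 0 \leq k \leq m\} = \{v_1^*, v_1^*, v_2^*, \ldots, v_m^*\},
\]
the peak value $v_1^*$ appearing once more on the left because the widest cell column contributes its full height through \emph{both} flanking vertex columns. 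Applying this identity on both the $x$- and $y$-sides shows that $G_\Pc$ (after relabelling) and $G_{\Pc^*}$ have the same degree sequences and are therefore isomorphic as Ferrer bipartite graphs.

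The main obstacle is this final combinatorial identity: one has to verify that translating the degree data of $F_\Pc$ into the degree data of $G_\Pc$ via Observation~\ref{degreeprojection}(i) produces exactly one extra copy of the peak value on each side, and no other discrepancies. This is precisely what makes the isomorphism class of $G_\Pc$ depend only on the multisets underlying $V_\Pc$ and $H_\Pc$, and hence only on the Ferrer diagram $\Pc^*$.
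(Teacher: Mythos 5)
Your proposal is correct and follows essentially the same route as the paper: relabel the vertices of $G_\Pc$ in decreasing degree order, invoke Lemma~\ref{degreeproperty} to see the relabelled graph is a Ferrer graph, and then match it with $G_{\Pc^*}$ via the degree data from Observation~\ref{degreeprojection}. The only difference is that you spell out the multiset identity $\{\max(v_k,v_{k+1})\colon 0\leq k\leq m\}=\{v_1^*,v_1^*,v_2^*,\ldots,v_m^*\}$ behind the degree-sequence comparison, which the paper compresses into the assertion that $V_{\Pc^*}=(\deg t_1-1,\ldots,\deg t_m-1)$ is an immediate consequence of Observation~\ref{degreeprojection}(ii).
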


\begin{proof}
First, we will show that $G_{\Pc}\iso H$ where $H$ is a Ferrer graph. Let $V(G_{\Pc})= \{x_0, \ldots, x_m\} \sqcup \{y_0, \ldots, y_n\}$. Similar to the proof of Proposition \ref{prop:Ferrer}, we relabel the vertices of $G_\Pc$ as $V(G_{\Pc})= \{t_0, \ldots, t_m\} \sqcup \{s_0, \ldots, s_n\}$ such that $\deg t_{0} \geq \deg t_{1} \geq \cdots \geq \deg t_{m}$ and $\deg s_{0} \geq \deg s_{1} \geq \cdots \geq \deg s_{n}$. Let $H$ be the new graph obtained by relabelling of the vertices of $G_{\Pc}$. Then by using Lemma \ref{degreeproperty}, we conclude that $H$ is a  Ferrer graph.

Now we will show that $H \iso G_{\Pc^*}$. This is an immediate consequence of Observation~\ref{degreeprojection}(ii). Indeed $V_{\Pc^*}=(\deg t_1-1, \ldots, \deg t_m-1)$ and \\$H_{\Pc^*}=(\deg s_1-1, \ldots, \deg s_n-1)$.
\end{proof}



 \section{regularity of $L$-convex polyominoes}
 
Let $K$ be a field. We denote by $S$ the polynomial over $K$ with variables $x_v$, where $v \in V (\Pc)$. The binomial $x_a x_b - x_c x_d\in S$ is called an \emph{inner 2-minor} of $\Pc$ if $[a,b]$ is a rectangular subpolyomino of $\Pc$. Here $c,d$ are the anti-diagonal corners of $[a,b]$. The ideal $I_\Pc \subset S$, generated by all of the inner 2-minors of $\Pc$, is called the \emph{polyomino ideal} of $\Pc$, and $K [\Pc] = S/I_\Pc$ is called the \emph{coordinate ring} of $\Pc$.

 \begin{Theorem}\label{thm:K[P]}
Let $\Pc$ be an $L$-convex polyomino and let $\Pc^*$ be  the Ferrer diagram projected by $\Pc$. Then $K[\Pc]$ and $K[\Pc^*]$ are isomorphic standard graded $K$-algebras.
\end{Theorem}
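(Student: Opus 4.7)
The plan is to identify both $K[\Pc]$ and $K[\Pc^*]$ with the edge rings of their respective vertex bipartite graphs $G_\Pc$ and $G_{\Pc^*}$, and then invoke Corollary~\ref{cor:ferrer} to transfer the isomorphism.

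First I would recall from Qureshi \cite{Q} that when $\Pc$ is a convex polyomino (in particular an $L$-convex one), the $K$-algebra homomorphism
\[
\phi\colon S \to K[t_0,\ldots,t_m,s_0,\ldots,s_n], \qquad x_{(i,j)} \mapsto t_i s_j,
\]
has kernel exactly $I_\Pc$. Its image is the $K$-subalgebra generated by $\{t_i s_j : (i,j) \in V(\Pc)\}$, and since the edges of $G_\Pc$ are in bijection with $V(\Pc)$ via $\{x_i,y_j\}\in E(G_\Pc) \iff V_i \cap H_j \neq \emptyset$, this image is precisely the edge ring $K[G_\Pc]$. Every monomial generator $t_i s_j$ has degree one in the standard grading on the target, while each $x_v$ has degree one in $S$, so the induced isomorphism $K[\Pc] \iso K[G_\Pc]$ is one of standard graded $K$-algebras. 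Because $\Pc^*$ is a Ferrer diagram and hence convex (as observed just after the definition of Ferrer diagram), the same argument yields $K[\Pc^*] \iso K[G_{\Pc^*}]$ as standard graded $K$-algebras.

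Finally, Corollary~\ref{cor:ferrer} provides an isomorphism $G_\Pc \iso G_{\Pc^*}$, which is concretely given by a pair of bijections on the two vertex classes $\{x_0,\ldots,x_m\}$ and $\{y_0,\ldots,y_n\}$. Such a pair induces a renaming of the variables $t_i, s_j$ that carries monomial generators of $K[G_\Pc]$ bijectively onto those of $K[G_{\Pc^*}]$; this is a ring isomorphism and it preserves the standard grading since all generators are of degree one. Chaining the three isomorphisms gives $K[\Pc] \iso K[G_\Pc] \iso K[G_{\Pc^*}] \iso K[\Pc^*]$.

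The only nontrivial ingredient is the toric identification $K[\Pc] \iso K[G_\Pc]$ for convex polyominoes, which comes from \cite{Q}; once this is in hand, the remainder of the argument—the functoriality of the edge ring construction under graph isomorphism, together with the elementary check that generators have degree one—is entirely formal, and I do not foresee a genuine obstacle. The real work has already been done in Proposition~\ref{prop:Ferrer} and Corollary~\ref{cor:ferrer}, where the combinatorial equivalence of $\Pc$ and $\Pc^*$ at the level of graphs was established.
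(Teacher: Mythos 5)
Your proposal is correct and follows exactly the paper's own argument: identify $K[\Pc]$ with the edge ring $K[G_\Pc]$ via Qureshi's result for convex polyominoes, apply Corollary~\ref{cor:ferrer} to get $G_\Pc \iso G_{\Pc^*}$, and conclude. The extra details you supply about the toric map and degree-one generators are a harmless elaboration of the same route.
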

\begin{proof}
Since $\Pc$ is convex, it is known that $K[\Pc]$ is isomorphic to the edge ring $K[G_\Pc]$ of the bipartite graph $G_\Pc$  (see \cite[Section 2]{Q}).
By Corollary \ref{cor:ferrer}, $G_\Pc$ is isomorphic to $G_{\Pc^*}$. Hence the assertion follows.
\end{proof}
 \begin{Theorem}\label{regP}
 Let $\Pc$ be an $L$-convex polyomino and let $\Pc^*$ be the Ferrer diagram projected by $\Pc$. Moreover, let $H_{\Pc^*}=(h_1,\ldots, h_{n})$ . Then
 \[
 \reg (K[\Pc])=\min\{n, \{h_{j}+j-1 \ :  \ 1\leq j \leq n \}\}.
 \]
 \end{Theorem}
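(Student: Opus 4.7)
The plan is to reduce to a Ferrer diagram via Theorem~\ref{thm:K[P]}, identify the coordinate ring as the edge ring of a Ferrer bipartite graph, and invoke the description of Corso and Nagel~\cite{CN} of the regularity of such edge rings, translating the resulting closed form into the stated expression.

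First, Theorem~\ref{thm:K[P]} gives $K[\Pc]\iso K[\Pc^*]$ as standard graded $K$-algebras, so $\reg K[\Pc]=\reg K[\Pc^*]$; I may therefore assume that $\Pc=\Pc^*$ is a Ferrer diagram with horizontal projections $h_1\geq h_2\geq\cdots\geq h_n$. Since $\Pc$ is convex, $K[\Pc]$ is isomorphic to the edge ring $K[G_\Pc]$ by \cite[Section~2]{Q}. By Corollary~\ref{cor:ferrer} together with Observation~\ref{degreeprojection}(ii), $G_\Pc$ is itself a Ferrer bipartite graph on $(m+1)+(n+1)$ vertices whose $y$-side degree sequence is $(h_1+1,\,h_1+1,\,h_2+1,\ldots,h_n+1)$: Observation~\ref{degreeprojection}(ii) gives $\deg y_j=h_j+1$ for $1\leq j\leq n$, and the extra boundary vertex $y_0$ contributes the repeated leading entry $h_1+1$ since the topmost row of vertices of $\Pc$ contains precisely $h_1+1$ points.

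With this identification, I would apply the Corso--Nagel description of the minimal free resolution of the edge ring of a Ferrer graph to express $\reg K[G_\Pc]$ in closed combinatorial form as a function of this partition. An elementary manipulation --- reconciling the ``$+1$'' offsets between the polyomino data (used in the statement) and the Ferrer-graph data (on which Corso--Nagel operate) --- then rewrites that closed form as $\min\{n,\,h_j+j-1 : 1\leq j\leq n\}$. A useful sanity check, and a clean way to see why this combinatorial quantity is the natural one, is that $\min\{n,\,h_j+j-1 : 1\leq j\leq n\}$ equals the minimum vertex cover number of $F_\Pc$: for each $j\in\{1,\ldots,n\}$, the first $j-1$ rows together with the first $h_j$ columns cover every edge of $F_\Pc$ (because $h_i\leq h_j$ for $i\geq j$ forces every cell of row~$i$ into the first $h_j$ columns), giving a cover of size $h_j+j-1$, while covering all $n$ rows gives a cover of size $n$.

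The main obstacle is precisely this last translation: one must track carefully how the doubled leading entry $h_1+1$ and the shifts $h_j+1$ in the partition of $G_\Pc$ interact with the Corso--Nagel formula, verifying that the doubled entry does not contribute an additional term in the minimum (the would-be contribution from position $j=1$ at value $h_1+1-1=h_1$ coincides with that from position $j=2$) and that the remaining terms reassemble to give exactly $h_j+j-1$ and $n$. Once this bookkeeping is handled, the theorem follows.
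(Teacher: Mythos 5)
Your proposal follows exactly the route of the paper's proof: reduce to the projected Ferrer diagram via Theorem~\ref{thm:K[P]}, identify $K[\Pc^*]$ with the edge ring of the Ferrer graph $G_{\Pc^*}$, apply \cite[Proposition 5.7]{CN}, and translate using $\deg y_j=h_j+1$ from Observation~\ref{degreeprojection}(ii). The bookkeeping you flag as the main obstacle is harmless: the Corso--Nagel minimum as invoked in the paper ranges only over $y_1,\dots,y_n$ (so the doubled leading entry coming from $y_0$ never enters), and the substitution $\deg y_j+(j+1)-3=h_j+j-1$ yields the stated formula directly.
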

 \begin{proof}
By Theorem~\ref{thm:K[P]}, we have $K[\Pc]\iso K[\Pc^*]$. Therefore, it is enough to show that
\[
 \reg (K[\Pc^*])=\min\{n, \{h_{j}+j-1 \ :  \ 1\leq j \leq n \}\}.
 \]
Recall $G_{\Pc^*}$ is the bipartite graph associated to the vertices of $\Pc^*$. We may assume that
$V(G_{\Pc}^*)=\{x_{0},\ldots,x_{m}\} \sqcup \{y_{0},\ldots,y_{n}\}$. Then $\deg y_0=m+1\geq 2$ and $\deg x_{0}=n+1$. Hence, \cite[Proposition 5.7]{CN} gives
\begin{eqnarray*}
 \reg(K[G_{\Pc^*}])&=&\min \{n, \{\deg y_j +(j+1) -3 \ | \ 1 \leq j \leq n  \} \}=\\
 &=&\min \{n, \{\deg y_j +j-2 \ | \ 1 \leq j \leq n  \} \}
 \end{eqnarray*}
 We want to rewrite the formula above in terms of the horizontal projection of $\Pc^*$. According to Remark~\ref{degreeprojection}.(2), for any $1\leq j \leq n$ we have $h_{j}=\deg y_j -1$. Hence
 \[
\{ \deg y_j +j -2 \ | \ 1 \leq j \leq n \}=\{h_{j} +j-1 \ | \ 1 \leq j \leq n  \},
 \]
 and the assertion follows.
 \end{proof}

Let $\Pc$ be a Ferrer diagram with horizontal projections $(h_1,\ldots, h_{n})$. Then, by using a combinatorial argument, it is easy to see that for any $r \leq n$ the number of ways of placing $r$ rooks in non-attacking position in $\Pc$ is given by
\begin{equation}\label{r rooks}
\prod\limits_{i=1}^{r} (h_{r-i+1}-(i-1)).
\end{equation}
By using this fact we obtain
\begin{Theorem}\label{thm:rookreg}
Let $\Pc$ be an L-convex polyomino. Then
\[
\reg (K[\Pc])=r(\Pc).
\]
\end{Theorem}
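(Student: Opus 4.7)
The plan is to combine Theorem~\ref{regP}, Lemma~\ref{lem:rooks}, and the displayed rook formula \eqref{r rooks} to translate both sides of the desired identity into statements about the horizontal projections $(h_1,\ldots,h_n)$ of the Ferrer diagram $\Pc^*$. Since $r(\Pc)=r(\Pc^*)$ by Lemma~\ref{lem:rooks} and $\reg(K[\Pc])=\min\{n,\{h_j+j-1:1\le j\le n\}\}$ by Theorem~\ref{regP}, it suffices to show
\[
r(\Pc^*)=\min\Bigl\{n,\ \min_{1\le j\le n}(h_j+j-1)\Bigr\}.
\]
Call this common value $r^*$.

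First I would rewrite \eqref{r rooks} in a more convenient form. Setting $j=r-i+1$ in $\prod_{i=1}^r(h_{r-i+1}-(i-1))$ yields $\prod_{j=1}^r(h_j+j-r)$, so one can place $r$ non-attacking rooks on $\Pc^*$ if and only if every factor in this product is strictly positive, i.e.
\[
h_j+j-1\ge r\qquad\text{for all }1\le j\le r.
\]
Together with the trivial bound $r\le n$, this is the central combinatorial criterion for $r\le r(\Pc^*)$.

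For the inequality $r(\Pc^*)\ge r^*$, I would apply the criterion with $r=r^*$: by definition of $r^*$, $r^*\le n$ and for every $1\le j\le r^*$ one has $h_j+j-1\ge\min_k(h_k+k-1)\ge r^*$. Hence $r^*$ non-attacking rooks fit in $\Pc^*$.

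For the reverse inequality, suppose $r=r(\Pc^*)$. The criterion gives $r\le n$ and $h_j+j-1\ge r$ for $1\le j\le r$. It remains to verify $h_j+j-1\ge r$ also for $r<j\le n$; but here I would simply observe that $\Pc^*$ is a genuine Ferrer diagram on $n$ rows, so $h_j\ge 1$, and therefore $h_j+j-1\ge 1+j-1=j>r$. This yields $r\le\min_{1\le j\le n}(h_j+j-1)$ and $r\le n$, so $r\le r^*$. Combining the two inequalities finishes the proof.

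The argument is essentially a clean bookkeeping exercise once the product \eqref{r rooks} is rewritten; the only point that requires a moment's care is the case $j>r$ in the upper bound, where one must remember that every row of a Ferrer diagram contains at least one cell.
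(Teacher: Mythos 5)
Your proof is correct and follows essentially the same route as the paper: reduce to the Ferrer diagram $\Pc^*$ via Lemma~\ref{lem:rooks} and Theorem~\ref{regP}, then read off $r(\Pc^*)$ from the positivity of the factors in \eqref{r rooks}. The only difference is cosmetic — you make explicit the step (implicit in the paper's final equality) that for $j>r$ one automatically has $h_j+j-1\ge j>r$ since every row of $\Pc^*$ is nonempty, which is a worthwhile clarification but not a new idea.
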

\begin{proof}
From Lemma \ref{lem:rooks} we know that $r(\Pc)=r(\Pc^*)$ where $\Pc^*$ is the Ferrer diagram projected by $\Pc$. By Theorem~\ref{thm:K[P]}, it is enough to show that
\begin{equation}\label{eq:rook}
r(\Pc^*)=\min\{n, \{h_{j}+j-1 \ :  \ 1\leq j \leq n \}\},
\end{equation}
where $(h_1,\ldots, h_{n})$ are the horizontal projections of $\Pc^*$.
It follows from Equation \eqref{r rooks} that $r(\Pc^*)$ is the greatest integer $r \leq n$ such that each factor of $\prod\limits_{i=1}^{r} (h_{r-i+1}-(i-1))$ is positive. Hence, for any $i \in \{1,\ldots,r\}$ we must have $h_{r-i+1}-(i-1)>0.$
Fix an integer $i \in \{1,\ldots,r\}$. Then we see that
\[
h_{r-i+1}-(i-1)>0 \Leftrightarrow h_{r-i+1}-i+1+r-r>0 \Leftrightarrow r < h_{r-i+1}+(r-i)+1.
\]
Hence we conclude that $r \leq h_{r-i+1}+(r-i)$. Therefore,
\[
r(\Pc^*)=\max\{r: r \leq n \mbox{ and } r \leq \min\{h_{r-i+1}+(r-i) \ | \ 1\leq i \leq r\}\}
\]
\[=\min\{n, \{h_{j}+j-1 \ :  \ 1\leq j \leq n \}\}
\]
as requested.
\end{proof}

We observe that, by exchanging the role of rows and columns in $\Pc^*$, we obtain
\[
r(\Pc^*)=\min\{m, \{v_{j}+j-1 \ :  \ 1\leq j \leq m \}\}
\]
which is similar to \eqref{eq:rook}.

\section{On the Gorenstein property of $L$-convex polyominoes}
Let $\Pc$ be an $L$-convex polyomino. Assume that the unique rectangular subpolyomino of $\Pc$ having width $m$ has height $d \in \mathbb{N}$. Then for some positive integer $s$,
\[
H_{\Pc}=(h_1,\ldots, h_{s},m \ldots, m, h_{s+d+1},\ldots, h_{n})
\]
 with $h_{s},h_{s+d+1}<m$.
Let $\Pc_1$ be the collection of cells with $n-d$ rows satisfying the following property: $C_{ij}$ is a cell of $\Pc$ if and only if $C_{ij}$ is a cell of $\Pc_1$ for $1\leq  i \leq s$, and for $s+d+1 \leq  i \leq n$, $C_{i-d,j}$ is a cell of $\Pc_1$.

\begin{Lemma}
$\Pc_1$ is an $L$-convex polyomino.
\end{Lemma}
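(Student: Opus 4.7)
The plan is to verify the three requirements for $\Pc_1$ to be an $L$-convex polyomino: it should be a connected collection of cells in $\NN^2$, both row convex and column convex, and any two of its cells should be joined by a path of cells with at most one change of direction.

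First I would check row and column convexity. Row convexity is immediate, because every row of $\Pc_1$ either coincides with a row of $\Pc$ (for $1 \leq i \leq s$) or, after a vertical shift by $d$, with a row of $\Pc$ (for $s+d+1 \leq i \leq n$), and hence is a horizontal interval. For column convexity the key observation is that since rows $s+1,\ldots,s+d$ of $\Pc$ all have width $m$, each column $j\in\{1,\ldots,m\}$ of $\Pc$ contains a cell in every one of those $d$ middle rows. By column convexity of $\Pc$, the cells of column $j$ form a vertical interval $[r_j, r_j']$ containing $[s+1, s+d]$, and deleting the $d$ middle cells and collapsing yields the (possibly empty) interval $[r_j, r_j' - d]$ in $\Pc_1$; so $\Pc_1$ is column convex.

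For connectedness I would invoke Theorem~\ref{thm:proj}(d). Since $h_s, h_{s+d+1} < m$, one of these two numbers is at most the other; by (d), whichever of rows $s, s+d+1$ of $\Pc$ has the smaller horizontal projection is contained column-wise in the other, so these two rows share at least $\min(h_s, h_{s+d+1}) \geq 1$ columns. After the collapse they become rows $s$ and $s+1$ of $\Pc_1$, and a shared column provides a pair of cells with a common edge, gluing the top part (rows $1,\ldots,s$) to the bottom part (rows $s+1,\ldots,n-d$) into a single connected collection.

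For $L$-convexity I would take two cells $(i,j)$ and $(i',j')$ of $\Pc_1$, lift them to cells of $\Pc$ by shifting row indices $\geq s+1$ by $+d$, and use that $\Pc$ is $L$-convex to choose a path between the lifts with at most one turn, necessarily at one of the corner points $(i, j')$ or $(i', j)$ after the shift. If both cells lie entirely in the top, or both in the bottom, of $\Pc_1$, then every intermediate cell on the path has row index between those of the two endpoints and therefore stays in the same part, so the path descends to $\Pc_1$ unchanged. If one cell is in the top and the other in the bottom, the vertical segment of the path crosses all of rows $s+1,\ldots,s+d$ in a single column, and by the column-convexity computation above this segment collapses to a shorter but still contiguous vertical interval in $\Pc_1$, while the horizontal segment lies in a single row that is preserved intact.

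The main obstacle is this last case: one must be certain that deleting the $d$ middle rows does not introduce a gap along the vertical segment of the $L$-path. This is settled by exactly the observation used for column convexity, namely that the vertical segment occupies an interval of its column in $\Pc$ which necessarily contains $[s+1, s+d]$, so the deletion merely shortens the segment without disconnecting it, and the resulting path in $\Pc_1$ still has at most one turn.
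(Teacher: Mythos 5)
Your proposal is correct and follows essentially the same route as the paper: lift two cells of $\Pc_1$ to $\Pc$, take an $L$-path there, and observe that cutting out the full-width rectangle leaves a path with at most one change of direction, since the portion of the path inside the removed rectangle is a straight vertical segment. Your additional explicit checks of row/column convexity and connectedness are sound (and in fact already implied by the path argument, since two cells in a common row or column joined by a path with at most one turn must be joined by the straight path), so the extra detail only makes the paper's argument more self-contained.
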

\begin{proof}
$\Pc_1$ could be seen as the polyomino $\Pc$ from which we remove the maximal rectangle $R$ having width $m$. Hence, each cell in $\Pc_1$ corresponds uniquely to a cell in $\Pc$. Let $C,D \in \Pc_1$. Then we consider the corresponding cells $C',D' \in \Pc$. We observe that neither $C'$ nor $D'$ is a cell of $R$. Since $\Pc$ is $L$-convex, there exists a path of cells $\Cc'$ in $\Pc$ connecting $C'$ and $D'$ with at most one change of direction. \\
If no cell of $\Cc'$ belongs to $R$, then $\Cc'$ determines a path of cells $\Cc$ of $\Pc_1$ with at most one change of direction connecting $C$ and $D$.\\
Otherwise, since neither $C'$ nor $D'$ are cells of $R$, the path $\Cc'$ crosses $R$ and the induced path $\Cc' \cap R$ has no change of direction. Therefore, the path $\Cc$ in $\Pc_1$, obtained by cutting off the induced path $\Cc' \cap R$ from $\Cc'$, is a path of cells with at most one change of direction connecting $C$ and $D$.
\end{proof}

If $\Pc_1\neq \emptyset$, we may again remove the unique rectangle of maximal width from $\Pc_1$ to obtain $\Pc_2$ in a similar way.  After a finite number of such steps, say $t$ steps,  we arrive at $\Pc_t$ which is a rectangle. Then   $\Pc_{t+1}=\emptyset$.  We set $\Pc_0=\Pc$, and call the sequence $\Pc_0,\Pc_1,\ldots, \Pc_t$ the {\em derived  sequence} of $L$-convex polyominoes of $\Pc$.
   \begin{figure}[H]
  \centering
  \begin{subfigure}{0.25\textwidth}
   \centering
    \resizebox{0.9\textwidth}{!}{
  \begin{tikzpicture}

\draw[thick] (-1,1) --  (-1,2);
\draw[thick] (0,1) --  (0,3);
\draw[thick] (1,0) --  (1,5);
\draw[thick] (2,0) --  (2,5);
\draw[thick] (3,0) --  (3,4);
\draw[thick] (4,1) --  (4,2);

\draw[thick] (-1,1) --  (4,1);
\draw[thick] (-1,2) --  (4,2);
\draw[thick] (0,3) --  (3,3);
\draw[thick] (1,4) --  (3,4);
\draw[thick] (1,5) --  (2,5);
\draw[thick] (1,0) --  (3,0);

\fill[fill=gray, fill opacity=0.2] (1,2) -- (2,2)-- (2,3) -- (1,3);
\fill[fill=gray, fill opacity=0.2] (0,1) -- (0,3)-- (1,3) -- (1,1);
\fill[fill=gray, fill opacity=0.2] (2,0) -- (2,4)-- (3,4) -- (3,0);
\fill[fill=gray, fill opacity=0.2] (1,3) -- (1,5)-- (2,5) -- (2,3);
\fill[fill=gray, fill opacity=0.2] (3,1) -- (3,2)-- (4,2) -- (4,1);
\fill[fill=gray, fill opacity=0.2] (1,1) -- (1,2)-- (2,2) -- (2,1);
\fill[fill=gray, fill opacity=0.2] (1,0) -- (2,0)-- (2,1) -- (1,1);
\fill[fill=gray, fill opacity=0.2] (-1,1) -- (0,1)-- (0,2) -- (-1,2);
   \end{tikzpicture}}

   \end{subfigure}%
  \begin{subfigure}{0.25\textwidth}
   \centering
    \resizebox{0.6\textwidth}{!}{
  \begin{tikzpicture}

\draw[thick] (0,1) --  (0,2);
\draw[thick] (1,0) --  (1,4);
\draw[thick] (2,0) --  (2,4);
\draw[thick] (3,0) --  (3,3);

\draw[thick] (0,1) --  (3,1);
\draw[thick] (0,2) --  (3,2);
\draw[thick] (1,3) --  (3,3);
\draw[thick] (1,4) --  (2,4);
\draw[thick] (1,0) --  (3,0);

\fill[fill=gray, fill opacity=0.2] (1,0) -- (3,0)-- (3,3) -- (1,3);
\fill[fill=gray, fill opacity=0.2] (1,3) -- (1,4)-- (2,4) -- (2,3);
\fill[fill=gray, fill opacity=0.2] (0,1) -- (1,1)-- (1,2) -- (0,2);

   \end{tikzpicture}}
   \end{subfigure}%
     \begin{subfigure}{0.25\textwidth}
   \centering
    \resizebox{0.4\textwidth}{!}{
  \begin{tikzpicture}

\draw[thick] (1,0) --  (1,3);
\draw[thick] (2,0) --  (2,3);
\draw[thick] (3,0) --  (3,2);

\draw[thick] (1,1) --  (3,1);
\draw[thick] (1,2) --  (3,2);
\draw[thick] (1,3) --  (2,3);
\draw[thick] (1,0) --  (3,0);

\fill[fill=gray, fill opacity=0.2] (1,0) -- (3,0)-- (3,2) -- (1,2);
\fill[fill=gray, fill opacity=0.2] (1,2) -- (1,3)-- (2,3) -- (2,2);

   \end{tikzpicture}}
   \end{subfigure}%
   \begin{subfigure}{0.25\textwidth}
   \centering
    \resizebox{0.2\textwidth}{!}{
  \begin{tikzpicture}

\draw[thick] (0,0) --  (1,0);
\draw[thick] (0,0) --  (0,1);
\draw[thick] (1,0) --  (1,1);
\draw[thick] (0,1) -- (1,1);

\fill[fill=gray, fill opacity=0.2] (0,0) -- (0,1)-- (1,1) -- (1,0);

   \end{tikzpicture}}
   \end{subfigure}
   \caption{The derived sequence of $L$-convex polyominoes $\Pc_0=\Pc,\Pc_1,\Pc_2,\Pc_3$.}
   \end{figure}

\begin{Lemma}\label{lem:chocolate}
Let $\Pc$ be an $L$-convex polyomino, let $\Pc_0, \Pc_1, \ldots, \Pc_{t}$ be the derived sequence of $L$-convex polyominoes of $\Pc$. Let $\Pc^*$ be the Ferrer diagram projected by $\Pc$ and let $(\Pc^*)_0$, $(\Pc^*)_1$, $\ldots, (\Pc^*)_{t'}$. Then $t'=t$ and for any $0\leq k\leq t$ the polyomino $(\Pc^*)_k$ is the Ferrer diagram projected by $\Pc_{k}$. In other words, for all $k$ $(\Pc^*)_k=(\Pc_k)^*$.
\end{Lemma}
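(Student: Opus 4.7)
The plan is to argue by induction on $k$ that $(\Pc^*)_k = (\Pc_k)^*$, reducing the full statement to the one-step identity $(\Pc^*)_1 = (\Pc_1)^*$; applying this identity iteratively to $\Pc_1, \Pc_2, \ldots$ in place of $\Pc$ gives $(\Pc^*)_k = ((\Pc_{k-1})^*)_1 = ((\Pc_{k-1})_1)^* = (\Pc_k)^*$ at each step. The equality $t' = t$ will then follow automatically, since the derived sequence terminates precisely when the polyomino becomes a rectangle, and $\Pc \mapsto \Pc^*$ sends rectangles to rectangles (it preserves the bounding box dimensions).

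For the one-step identity, I would track both sides through their horizontal projections and then invoke that a Ferrer diagram is uniquely determined by its decreasing sequence of row widths. Write $H_\Pc = (h_1, \ldots, h_n)$ and $m = w(\Pc)$. By Theorem~\ref{thm:proj}(b), $H_\Pc$ is unimodal with maximum $m$, attained on a consecutive block of indices $s+1, \ldots, s+d$, where $d$ is the height of the unique maximal-width rectangle guaranteed by Lemma~\ref{thm:maxrect2}. By the definition of $\Pc_1$, we have $H_{\Pc_1} = (h_1,\ldots,h_s,h_{s+d+1},\ldots,h_n)$, hence $H_{(\Pc_1)^*}$ is the non-increasing rearrangement of this vector. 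On the other hand, $H_{\Pc^*}$ is the non-increasing rearrangement of the full $H_\Pc$, so its first $d$ entries equal $m$; therefore the maximal-width rectangle of $\Pc^*$ consists of the topmost $d$ rows, and deleting them yields $(\Pc^*)_1$ with horizontal projection equal to the non-increasing rearrangement of the $h_j$ with $j \notin \{s+1,\ldots,s+d\}$.

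Both $(\Pc^*)_1$ and $(\Pc_1)^*$ are Ferrer diagrams with the same horizontal projection vector, hence they coincide. I do not expect any serious obstacle; the only subtle points are bookkeeping, namely checking that $w(\Pc^*) = w(\Pc) = m$ (immediate from Proposition~\ref{prop:Ferrer}, where $v_1^* = n$ and $h_1^* = m$) and that in a Ferrer diagram the maximum row width is attained precisely on a top block of consecutive rows (which is clear from the definition of the Ferrer shape).
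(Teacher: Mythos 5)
Your proof is correct and takes essentially the same route as the paper's: both reduce the statement to the one-step identity $(\Pc^*)_1=(\Pc_1)^*$ and verify it by comparing projection vectors, the only difference being that the paper writes out both the horizontal and vertical projections explicitly, whereas you track only the horizontal ones and invoke the fact that a Ferrer diagram is determined by its non-increasing row widths. No gaps.
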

\begin{proof}
For $k=0$, the assertion is trivial.
We show that $(\Pc^*)_1$ is the Ferrer diagram projected by $\Pc_1$. For this aim, assume that the unique rectangular subpolyomino of $\Pc$ having width $m$ has height $d \in \mathbb{N}$. Let
\[
H_{\Pc}=(h_1,\ldots, h_{s},m \ldots, m, h_{s+d+1},\ldots, h_{n})
\]
 with $h_{s},h_{s+d+1}<m$ and let
 \[
 V_{\Pc}=(d,d,\ldots,d,v_{r+1},\ldots, v_{r+l},d,\ldots,d)
 \]
with  $v_{r+1},v_{r+l}>d$.

From Proposition \ref{prop:Ferrer} it follows that $\Pc^*$ has a maximal rectangle $R^*$ of width $m$ and height $d$
and
\[
H_{\Pc^*}=(m \ldots, m, h^*_{1},\ldots, h^*_{n-d})
\]
with $m>h^*_{1}\geq  \cdots \geq h^*_{n-d}$ and
 \[
 V_{\Pc^*}=(v^*_{1},\ldots, v^*_{l},d,\ldots,d).
 \]
 with $v^*_{1}\geq \cdots \geq v^*_{l}>d$. Hence the $L$-convex polyomino $(\Pc^*)_1$ is uniquely determined by the projections
 \[
H_{(\Pc^*)_1}=(h^*_{1},\ldots, h^*_{n-d}) \mbox{ and } V_{(\Pc^*)_1}=(v^*_{1}-d,\ldots, v^*_{l}-d).
\]
On the other hand, $\Pc_{1}$ is the $L$-convex polyomino uniquely determined by the projections $H_{\Pc_1}=(h_1,\ldots, h_{s}, h_{s+d+1},\ldots, h_{n})$ and, $V_{\Pc_1}=(v_{r+1}-d,v_{r+2}-d, \ldots, v_{r+l}-d)$. By reordering the two vectors in a decreasing order, we obtain the Ferrer diagram projected by $\Pc_1$ which coincides with $(\Pc^*)_1$. This proves the assertion for $k=1$.
By inductively applying the above argument, the assertion follows for all $k$.
\end{proof}
\begin{Theorem}
\label{gorenstein}
Let $\Pc$ be an $L$-convex polyomino and let $\Pc_0, \Pc_1, \ldots, \Pc_{t}$ be the derived sequence of $L$-convex polyominoes of $\Pc$. Then following conditions are equivalent:

\begin{enumerate}
\item[{\em (a)}] $\Pc$ is Gorenstein.
\item[{\em (b)}]For  $0\leq k\leq t$, the bounding box  of $\Pc_k$ is a square.
\end{enumerate}
\end{Theorem}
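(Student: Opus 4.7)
The approach I would take is to reduce to the case of a Ferrer diagram via Theorem~\ref{thm:K[P]} and Lemma~\ref{lem:chocolate}, and then to invoke Qureshi's characterization of Gorenstein stack polyominoes from \cite{Q}.

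First, by Theorem~\ref{thm:K[P]}, $K[\Pc] \iso K[\Pc^*]$, so $K[\Pc]$ is Gorenstein if and only if $K[\Pc^*]$ is. Next, Lemma~\ref{lem:chocolate} gives $(\Pc_k)^* = (\Pc^*)_k$ for every $k$, and $\Pc_k$ and $(\Pc_k)^*$ share the same multisets of horizontal and vertical projections, so their bounding boxes have the same width and height. Thus condition (b) for $\Pc$ is equivalent to condition (b) for $\Pc^*$, and one may assume throughout that $\Pc$ itself is a Ferrer diagram.

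Assume now that $\Pc$ has horizontal projections $h_1 \geq h_2 \geq \cdots \geq h_n$ (by Observation~\ref{degreeprojection}(ii)), and let $m_0 > m_1 > \cdots > m_t$ be the distinct values, with $m_k$ occurring $d_k$ times. Since $\Pc$ is a Ferrer diagram, the rows of maximal width $m_0$ are precisely the top $d_0$ rows, and removing the corresponding rectangle leaves a Ferrer diagram with projections $h_{d_0+1}, \ldots, h_n$. Iterating this observation, I would show by induction that each $\Pc_k$ is a Ferrer diagram of width $m_k$ and height $\sum_{i=k}^{t} d_i$, so condition (b) becomes the system
\[
m_k = \sum_{i=k}^{t} d_i \quad \text{for all } 0 \leq k \leq t.
\]

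To finish, I would appeal to Qureshi's classification of Gorenstein stack polyominoes via the class group of $K[\Pc]$ from \cite{Q}, noting that a Ferrer diagram is a stack polyomino after a $90$-degree rotation. Her criterion should specialize in the Ferrer case to exactly the system of equalities above on $(m_k, d_k)$, yielding the equivalence with (a). The main obstacle I foresee is this last translation: Qureshi's criterion is stated in terms of the step structure of the stack and its class group, so one must carefully match those parameters with the pairs $(m_k, d_k)$ extracted from the derived sequence before the equivalence falls out.
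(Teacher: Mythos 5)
Your proposal follows essentially the same route as the paper: reduce to the projected Ferrer diagram $\Pc^*$ via Theorem~\ref{thm:K[P]}, transfer condition (b) between $\Pc$ and $\Pc^*$ via Lemma~\ref{lem:chocolate}, and conclude by Qureshi's Gorenstein criterion for stack polyominoes (\cite[Corollary 4.12]{Q}), which the paper cites as stating precisely the derived-sequence squares condition. Your extra explicit computation of the derived sequence of a Ferrer diagram in terms of the distinct projection values $m_k$ and their multiplicities $d_k$ is correct and amounts to the paper's numerical reformulation in Corollary~\ref{againgorenstein}.
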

\begin{proof}
By Theorem~\ref{thm:K[P]}, we have $K[\Pc]\iso K[\Pc^*]$, where 
$\Pc^*$ is the Ferrer diagram projected by $\Pc$. Therefore, $K[\Pc]$ is Gorenstein if and only if $K[\Pc^*]$ is Gorenstein. Note that $\Pc^*$ can be viewed as a stack polyomino. Hence it follows from  \cite[Corollary 4.12]{Q} that $K[\Pc^*]$ is Gorenstein if and only if the bounding box  of $(\Pc^*)_k$ is a square for all $0\leq k\leq t$. By Lemma~\ref{lem:chocolate}, this is the case if and only if the bounding box of $\Pc_k$ is a square for all $0\leq k\leq t$.
\end{proof}

The following numerical criteria for the Gorensteinness of $\Pc$ are an immediate consequence of Theorem~\ref{gorenstein}.

\begin{Corollary}
\label{againgorenstein}
Let $\Pc$ be an $L$-convex polyomino with the vector $H_{\Pc}=(h_1, h_2, \ldots, h_{n})$ of  horizontal projections of $\Pc$ and the vector  $V_{\Pc}=(v_1, v_2, \ldots, v_{m})$ of  vertical projections of $\Pc$. We set
\[
\{h_1,h_2,\ldots,h_n\}=\{g_1<g_2<\cdots <g_r\} \text{ and } \{v_1,v_2,\ldots,v_m\}=\{w_1<w_2<\cdots <w_s\},
\]
and let
\[
a_i=|\{h_j\:\; h_j= g_i\}| \text{ for } i=1,\ldots,r,
\text{ and }
b_i=|\{v_j\:\; v_j= w_i\} |\text{ for } i=1,\ldots,s.
\]
Then the following conditions are equivalent:
\begin{enumerate}
\item[{\em (a)}] $\Pc$ is Gorenstein.
\item[{\em (b)}] $g_\ell=\sum_{i=1}^\ell a_i$ for $\ell=1,\ldots,r$.
\item[{\em (c)}] $w_\ell=\sum_{i=1}^\ell b_i$ for $\ell=1,\ldots,s$.
\end{enumerate}
\end{Corollary}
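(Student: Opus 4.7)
The strategy is to read off the numerical conditions directly from the bounding boxes of the derived sequence. By Theorem~\ref{gorenstein}, $\Pc$ is Gorenstein if and only if every $\Pc_k$ has a square bounding box, and since $(\Pc_k)^* = (\Pc^*)_k$ by Lemma~\ref{lem:chocolate}, and these two polyominoes share the same multisets of horizontal and vertical projections, their bounding boxes have identical dimensions. Hence (a) is equivalent to the squareness of every bounding box in the derived sequence of the Ferrer diagram $\Pc^*$.

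To make this explicit, I would observe that in $\Pc^*$, arranged so that the widest rows lie at the top, the rows break into blocks: the topmost $a_r$ rows have width $g_r = m$, the next $a_{r-1}$ rows have width $g_{r-1}$, and so on, down to $a_1$ rows of width $g_1$ at the bottom. Since $\Pc^*$ is a Ferrer diagram, Lemma~\ref{thm:maxrect2} guarantees that its unique maximal rectangle of width $m$ is exactly the top block of $a_r$ rows, of height $d = a_r$. Removing it yields $(\Pc^*)_1$, whose top block now consists of $a_{r-1}$ rows of width $g_{r-1}$. Iterating, the derived sequence has length $t + 1 = r$, and the bounding box of $(\Pc^*)_k$ for $0 \leq k \leq r-1$ has width $g_{r-k}$ and height $\sum_{i=1}^{r-k} a_i$. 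These are all squares exactly when $g_\ell = \sum_{i=1}^\ell a_i$ for every $\ell \in \{1,\ldots,r\}$, giving (a) $\iff$ (b).

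For (a) $\iff$ (c) I would repeat the same argument with the roles of rows and columns reversed. Reflecting $\Pc^*$ across the diagonal produces another Ferrer diagram whose horizontal projections are precisely the vertical projections of $\Pc^*$; this reflection is merely a renaming of the generators of $K[\Pc^*]$ and so does not alter the Gorenstein property. Running the previous computation on the reflected diagram yields the equivalent numerical criterion $w_\ell = \sum_{i=1}^\ell b_i$ for $\ell = 1, \ldots, s$. The main piece of bookkeeping is the identification, at each stage, of the top block of rows of current maximal width as the unique maximal rectangle of current maximal width, which is immediate from the Ferrer shape and Lemma~\ref{thm:maxrect2}; no serious obstacle is expected.
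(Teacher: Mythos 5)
Your proposal is correct and follows exactly the route the paper intends: the paper gives no written proof, stating only that the corollary is ``an immediate consequence of Theorem~\ref{gorenstein},'' and your computation of the bounding boxes of the derived sequence of the Ferrer diagram $\Pc^*$ (width $g_{r-k}$, height $\sum_{i=1}^{r-k} a_i$ at step $k$), together with the row--column transposition for part (c), is precisely the bookkeeping being left implicit. No gaps.
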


\begin{Theorem}
\label{late}
Let $\Pc$ be $L$-convex polyominoes such that $K[\Pc]$ is not Gorenstein. Then following are equivalent:
\begin{enumerate}
\item[{\em (a)}]  $K[\Pc]$ is Gorenstein on the punctured spectrum.
\item[{\em (b)}] $\Pc$ is not a square and $K[\Pc]$ has an isolated singularity
\item[{\em (c)}] $\Pc$ is rectangle, but not a square.
\end{enumerate}
\end{Theorem}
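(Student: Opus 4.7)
My plan is to reduce, as in Theorem~\ref{gorenstein}, to the Ferrer diagram $\Pc^*$ projected by $\Pc$ via the isomorphism $K[\Pc]\iso K[\Pc^*]$ of Theorem~\ref{thm:K[P]}. Since each of the three properties under consideration (not Gorenstein, Gorenstein on the punctured spectrum, isolated singularity) depends only on the graded $K$-algebra structure, they pass freely between $K[\Pc]$ and $K[\Pc^*]$. Moreover, $K[\Pc^*]$ can be viewed as a Hibi ring $\MR_K[Q]$ on a poset $Q$ encoding the staircase shape of $\Pc^*$, so the conditions can be read off from $Q$.

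For the implications (c)$\Rightarrow$(a) and (c)$\Rightarrow$(b), suppose $\Pc$ is a non-square rectangle with bounding box $[(0,0),(m,n)]$, $m\neq n$. Then $\Pc^*=\Pc$ and $K[\Pc]$ is the classical determinantal ring $K[X]/I_2(X)$ for an $(m+1)\times(n+1)$ generic matrix $X$, equivalently the homogeneous coordinate ring of the Segre embedding $\PP^m\times\PP^n$. It is well known that the singular locus of this variety consists only of the vertex of the affine cone, which corresponds to the irrelevant maximal ideal. Hence $K[\Pc]$ has an isolated singularity, which in particular implies Gorensteinness on the punctured spectrum. The implication (b)$\Rightarrow$(a) is immediate, since regular local rings are Gorenstein.

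The substantive implication is (a)$\Rightarrow$(c). Here I would invoke the recent result of \cite{HMP} characterizing those Hibi rings $\MR_K[Q]$ that are Gorenstein on the punctured spectrum but not Gorenstein, in terms of a combinatorial property of $Q$. The plan is to apply this criterion to the poset $Q$ associated to $\Pc^*$ and show that, among posets arising from Ferrer diagrams, it forces $Q$ to be a product of two chains, equivalently $\Pc^*$ to be a rectangle. Once $\Pc^*$ is rectangular, the horizontal projection $H_{\Pc^*}$ is constant; but $H_{\Pc^*}$ is the non-increasing rearrangement of $H_\Pc$, so $H_\Pc$ is constant as well, and combined with the row-convexity of $\Pc$ this forces $\Pc$ itself to be a rectangle. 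Finally, the hypothesis that $K[\Pc]$ is not Gorenstein rules out the square case via Theorem~\ref{gorenstein}.

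The main obstacle is precisely this last combinatorial step: pinning down the exact form of the \cite{HMP} criterion when $Q$ is the Hibi poset of a Ferrer diagram, and ruling out every non-rectangular Ferrer shape. Once that is in place, the remaining pieces amount to matching definitions and citing standard facts about determinantal rings and Hibi rings.
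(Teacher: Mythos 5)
Your overall strategy---pass to the Ferrer diagram $\Pc^*$ via $K[\Pc]\iso K[\Pc^*]$, view $K[\Pc^*]$ as a Hibi ring $K[Q]$, and appeal to the criterion of \cite{HMP}---is exactly the route the paper takes, and your handling of (c)$\Rightarrow$(b)$\Rightarrow$(a) (the ring of a non-square rectangle is $K[X]/I_2(X)$ for a generic $(m+1)\times(n+1)$ matrix, whose singular locus is the vertex of the cone; regularity off the vertex gives Gorensteinness there) matches the paper, which cites \cite[Theorem 2.6]{BV} for this. The problem is that the implication (a)$\Rightarrow$(c), which you yourself flag as ``the main obstacle,'' is precisely the substantive content of the theorem, and you have not supplied it. The missing step is short but essential: by \cite[Corollary~3.5]{HMP} (Theorem~\ref{fresh}(b)), $K[Q]$ is Gorenstein on the punctured spectrum if and only if every connected component of $Q$ is pure, while by Hibi's criterion (Theorem~\ref{fresh}(a)) $K[Q]$ is Gorenstein if and only if $Q$ itself is pure. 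Under hypothesis (a) together with non-Gorensteinness, all components of $Q$ are pure but $Q$ is not, which forces $Q$ to be disconnected. For the poset of a Ferrer diagram---two chains $H_1<\cdots<H_n$ and $V_1<\cdots<V_m$ joined by cover relations $H_i<V_j$ arising from the inner corners---disconnectedness means there are no such cover relations at all, i.e.\ $\Pc^*$ has no inner corner and is therefore a rectangle. Without this chain of deductions the proof is not complete.

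A second, related point: your stated target, that the criterion ``forces $Q$ to be a product of two chains,'' is not what you want and would in fact derail the argument. A product of two chains is connected and pure, so its Hibi ring is Gorenstein, contradicting your standing hypothesis; what the argument actually yields is that $Q$ is a \emph{disjoint union} of two chains, necessarily of different lengths (otherwise $Q$ would be pure and $K[Q]$ Gorenstein). It is the ideal lattice $\mathcal{I}(Q)$, not $Q$, that is a product of two chains. Your reduction from ``$\Pc^*$ is a rectangle'' to ``$\Pc$ is a rectangle'' via constancy of the horizontal projections is fine, and the exclusion of the square case via Theorem~\ref{gorenstein} is correct.
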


Before we start the proof of the theorem, we note that if $\Pc$ is a Ferrer diagram, then $K[\Pc]$ may be viewed as a Hibi ring. Recall for a  given finite poset $Q= \{v_1,\ldots,v_n\}$ and a field $K$, the \textit{Hibi ring} over the field $K$ associated to $Q$, which we denote by $K[Q] \subset K[y,x_1,\dots,x_n]$, is defined as follows. The $K$-algebra $K[Q]$ is generated by the monomials $yx_I := y\prod_{v_i \in I} x_i$ for every $I \in \mathcal{I}(Q)$, that is
\[
K[Q] := K[yx_I | I \in \mathcal{I}(Q)].
\]
The algebra $K[Q]$ is standard graded if we set $\deg(yx_I)=1$ for all $I \in \mathcal{I}(Q)$.
Here $\mathcal{I}(Q)$ is the set of poset ideals of $Q$. The poset ideals of $Q$ are just the subset $I\subset Q$ with the property that if $p\in Q$ and $q\leq p$, then $q\in Q$. 

Let $\Pc$ be a Ferrer diagram with maximal horizontal edge intervals $\{H_{0},\ldots,H_{n}\}$, numbered increasingly from the bottom to the top, and maximal vertical edge intervals $\{V_{0},\ldots,V_{m}\}$, numbered increasingly from the left to the right. We let  $Q$ be  the poset on the  set  $\{H_{1},\ldots,H_{n},V_{1},\ldots,V_{m}\}$  consisting of two chains $H_1<\ldots < H_{n}$ and $V_1<\ldots < V_m$ and the covering relations $H_i <V_{j}$, if $H_{i}$ intersects $V_{j}$ in a way such that there is no $0\leq i'<i$ for which $H_{i'}$ intersects $V_{j}$, and $j$ is the smallest integer with this property.

\begin{Lemma}
The standard graded $K$-algebras $K[Q]$ and $K[\Pc]$ are isomorphic.
\end{Lemma}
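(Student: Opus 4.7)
The plan is to build an explicit $K$-algebra isomorphism via the natural bijection between the vertices of $\Pc$ and the poset ideals of $Q$. For each vertex $v = (i,j) \in V(\Pc)$, set
\[
I_v := \{V_k : 1 \leq k \leq i\} \cup \{H_l : 1 \leq l \leq j\} \subseteq Q.
\]
The first step is to verify that $v \mapsto I_v$ is a bijection between $V(\Pc)$ and $\MI(Q)$. This is the main technical point: the defining condition of the covering relations $H_a < V_b$ has been arranged precisely so that the downward closure of $\{V_1,\dots,V_i,H_1,\dots,H_j\}$ in $Q$ is itself (i.e., $I_v$ is a poset ideal) if and only if $(i,j)$ is a vertex of $\Pc$. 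Checking this reduces to comparing, column by column, the lowest row of $\Pc$ meeting that column with the largest $b$ that appears as the source of a cover $H_b < V_c$ with $c \leq i$; a non-decreasing argument on the ``lowest row'' function shows these two thresholds agree, giving the bijection.

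With the bijection in hand, I would define $\psi : S \to K[Q]$ on generators by $\psi(x_v) = y \, x_{I_v}$. Well-definedness modulo $I_\Pc$ reduces to the following matching of relations: for an inner rectangle with diagonal corners $a = (i_1,j_1)$, $b = (i_2,j_2)$ (with $i_1 < i_2$, $j_1 < j_2$) and anti-diagonal corners $c = (i_1,j_2)$, $d = (i_2,j_1)$, a direct computation with the definition of $I_v$ yields $I_c \cap I_d = I_a$ and $I_c \cup I_d = I_b$, and (by inspection of coordinates) $I_c$ and $I_d$ are incomparable in $\MI(Q)$. Hence the Hibi relation $x_{I_c} x_{I_d} = x_{I_a} x_{I_b}$ in $K[Q]$ is precisely the image of the inner $2$-minor $x_a x_b - x_c x_d$, so $\psi$ descends to a surjective $K$-algebra homomorphism $\bar\psi : K[\Pc] \to K[Q]$; surjectivity is immediate since every Hibi generator $y \, x_I$ lies in the image.

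For injectivity I would invoke a Krull dimension argument. Both rings are standard graded toric $K$-domains: $K[\Pc]$ via its identification with the edge ring $K[G_\Pc]$ of the connected bipartite graph $G_\Pc$ on $m+n+2$ vertices (used already in the proof of Theorem~\ref{thm:K[P]} and coming from \cite[Section 2]{Q}), and $K[Q]$ by standard Hibi-ring theory. Their Krull dimensions both equal $m+n+1$, so the surjection $\bar\psi$ of domains of equal dimension is automatically injective, and the isomorphism follows. The principal obstacle is the careful verification of the bijection between $V(\Pc)$ and $\MI(Q)$ in the first step, since one must match the combinatorial closure condition for poset ideals against the geometric staircase condition for being a vertex of $\Pc$, and this matching depends critically on the precise form of the covering relations defining $Q$.
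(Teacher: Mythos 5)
Your proposal is correct, and it reaches the same homomorphism $x_v\mapsto y\,x_{I_v}$ as the paper, but the two halves of the argument are handled differently. For the bijection $V(\Pc)\leftrightarrow \MI(Q)$ the paper does not verify the correspondence by hand: it endows $V(\Pc)$ with componentwise meet and join, observes that this makes $V(\Pc)$ a distributive lattice whose poset of join-irreducible elements is exactly $Q$, and invokes Birkhoff's structure theorem; your explicit description $I_{(i,j)}=\{V_k: k\leq i\}\cup\{H_l: l\leq j\}$ is precisely that correspondence made concrete, so this part is the same content in different clothing. The genuine divergence is in identifying the kernel. The paper cites Hibi's theorem that the defining ideal of $K[Q]$ is generated by the straightening relations $x_ax_b-x_{a\wedge b}x_{a\vee b}$, which yields both inclusions $\Ker(\varphi)\subseteq I_\Pc$ and $\Ker(\varphi)\supseteq I_\Pc$ at once (using that for a Ferrer diagram every incomparable pair of vertices spans an inner rectangle). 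You instead verify only the easy inclusion $I_\Pc\subseteq\Ker(\psi)$ via $I_c\cap I_d=I_a$, $I_c\cup I_d=I_b$, and recover equality from a dimension count: $K[\Pc]\iso K[G_{\Pc}]$ is a toric domain of dimension $m+n+1$ (the edge ring of a connected bipartite graph on $m+n+2$ vertices, as used in Theorem~\ref{thm:K[P]}), $\dim K[Q]=|Q|+1=m+n+1$, and a surjection of affine domains of equal dimension is injective. This is a valid and self-contained alternative; it trades the citation of Hibi's presentation theorem for the (here unproblematic, but in general nontrivial) input that $K[\Pc]$ is a domain of known dimension. Both routes are sound; the paper's is shorter given the cited machinery, yours is more elementary at the level of the toric ideal.
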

\begin{proof}
We may assume that the interval $[(0,0),(m,n)]$ is  the bounding box of the Ferrer diagram $\Pc$. For any two vertices $a=(i,j)$ and $b=(k,l)$ of $\Pc$  we define the  meet $a\wedge  b=(\min\{i,k\}, \min\{j,l\})$ and the join $a\vee b=(\max\{i,k\}, \max\{j,l\})$. With this  operations of meet and join, $\Pc$ is a distributive lattice. An element $c$ of this lattice is called join-irreducible,  if $c\neq (0,0)$ and whenever $a\wedge b=c$, then $a=c$ or $b=c$. By Birkhoff's fundamental structure theorem \cite{Bi}, any finite distributive lattice is the ideal lattice of the poset of its joint irreducible elements. In our case the poset of join irreducible elements of $\Pc$ is exactly the poset $Q$ described above. Thus  the elements $a\in \Pc$ are in bijection with   the poset ideals of $Q$. In fact, the poset ideal $I_a\in \mathcal{I}(Q)$  corresponding ot $a$ is the set of join irreducible elements $q\in Q$ with $q\leq a$. Thus we have a surjective $K$-algebra homomorphism $$\varphi \: S=K[x_a\:\; a\in \Pc]\to K[Q]= K[yx_{I_a} \:\;  I_a \in \mathcal{I}(Q)]$$. 
As shown by Hibi \cite{Hi} (see also \cite[Theorem 10.1.3]{HH}), $\Ker(\varphi)$ is generated by the relations $x_ax_b-x_{a\wedge b}x_{a\vee b}$. This shows that $\Ker(\varphi)= I_{\Pc}$, as desired.
\end{proof}

 \begin{figure}[H]
   \centering
   \begin{subfigure}{0.5\textwidth}
   \centering
  \resizebox{0.8\textwidth}{!}{
  \begin{tikzpicture}

\draw (0,5)--(5,5);
\draw(0,4)-- (5,4);
\draw(0,3)-- (3,3);
\draw(0,2)-- (3,2);
\draw(0,1)-- (2,1);
\draw(0,0)-- (1,0);

\draw (0,0)--(0,5);
\draw (1,0)--(1,5);
\draw (2,1)--(2,5);
\draw (3,2)--(3,5);
\draw (4,4)-- (4,5);
\draw (5,4)-- (5,5);


\filldraw (1,0)  circle (1.5*\rad) node[anchor=north] {$V_1$}; 
\filldraw (2,1)  circle (1.5*\rad) node[anchor=north] {$V_2$}; 
\filldraw (3,2)  circle (1.5*\rad) node[anchor=north] {$V_3$}; 
\filldraw (4,4)  circle (1.5*\rad) node[anchor=north] {$V_4$}; 
\filldraw (5,4)  circle (1.5*\rad) node[anchor=north] {$V_5$}; 
\filldraw (0,1)  circle (1.5*\rad) node[anchor=east] {$H_1$}; 
\filldraw (0,2)  circle (1.5*\rad) node[anchor=east] {$H_2$}; 
\filldraw (0,3)  circle (1.5*\rad) node[anchor=east] {$H_3$}; 
\filldraw (0,4)  circle (1.5*\rad) node[anchor=east] {$H_4$}; 
\filldraw (0,5)  circle (1.5*\rad) node[anchor=east] {$H_5$}; 

   \end{tikzpicture}}
   \caption{Ferrer diagram}
\end{subfigure}%
 \begin{subfigure}{0.5\textwidth}
 \centering
  \resizebox{0.5\textwidth}{!}{
  \begin{tikzpicture}
 
\draw (0,1)--(0,5);
\draw (2,2)--(2,6);

\draw(0,1)--(2,3);
\draw(0,2)--(2,4);
\draw(0,4)--(2,5);

\filldraw (0,1)  circle (1.5*\rad) node[anchor=east] {$H_1$}; 
\filldraw (0,2)  circle (1.5*\rad) node[anchor=east] {$H_2$}; 
\filldraw (0,3)  circle (1.5*\rad) node[anchor=east] {$H_3$}; 
\filldraw (0,4)  circle (1.5*\rad) node[anchor=east] {$H_4$}; 
\filldraw (0,5)  circle (1.5*\rad) node[anchor=east] {$H_5$};
\filldraw (2,2)  circle (1.5*\rad) node[anchor=west] {$V_1$}; 
\filldraw (2,3)  circle (1.5*\rad) node[anchor=west] {$V_2$}; 
\filldraw (2,4)  circle (1.5*\rad) node[anchor=west] {$V_3$}; 
\filldraw (2,5)  circle (1.5*\rad) node[anchor=west] {$V_4$}; 
\filldraw (2,6)  circle (1.5*\rad) node[anchor=west] {$V_5$}; 

   \end{tikzpicture}}\ \\
   \caption{Poset of join-irreducible elements}
\end{subfigure}
\end{figure}

Let $Q$ be a poset. The Hasse diagram of $Q$, viewed as a graph, decomposes into connected components. The corresponding posets $Q_1,\ldots,Q_{r}$ are called the \emph{connected components} of $Q$.

Now for  the proof of Theorem~\ref{late} will use the following results

\begin{Theorem}
\label{fresh}
Let $Q$ be a finite poset and let $Q_1,\ldots Q_r$ be the connected components of $Q$. 
\begin{enumerate}
\item[{\em (a)}] \emph{(}\cite[Corollary]{Hi}\emph{)} $K[Q]$ is Gorenstein if and only if $Q$ is pure (i.e.\ ~ all maximal chains in $Q$ have the same length).
\item[{\em (b)}] \emph{(}\cite[Corollary~3.5]{HMP}\emph{)} $K[Q]$ is Gorenstein on the punctured spectrum if and only if each $Q_i$ is pure. 
\end{enumerate}
\end{Theorem}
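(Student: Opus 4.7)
The plan is to establish the cycle of implications $(b) \Rightarrow (a) \Rightarrow (c) \Rightarrow (b)$. The implication $(b) \Rightarrow (a)$ is immediate: an isolated singularity means that $K[\Pc]_{\pp}$ is regular, and therefore Gorenstein, for every non-maximal prime $\pp$, which is exactly the punctured-spectrum Gorenstein property.

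For $(a) \Rightarrow (c)$, I would route the argument through the Hibi ring. By Theorem~\ref{thm:K[P]} together with the preceding lemma, $K[\Pc] \iso K[\Pc^*] \iso K[Q]$ as standard graded $K$-algebras. The combinatorial crux is the claim that $Q$ is disconnected if and only if $\Pc^*$ is a rectangle. If $\Pc^*$ is a rectangle, every maximal vertical edge interval $V_j$ meets $H_0$, so the cover condition on $H_i < V_j$ forces $i = 0 \notin Q$, and no relations $H_i < V_j$ appear in $Q$; hence $Q$ splits as the disjoint union of the two chains $H_1 < \cdots < H_n$ and $V_1 < \cdots < V_m$. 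Conversely, any step in a non-rectangular $\Pc^*$ produces a column $V_j$ failing to reach $H_0$, and the corresponding cover $H_i < V_j$ with $i \geq 1$ glues the two chains together. Since $\Pc$ is a rectangle if and only if $\Pc^*$ is, we get $\Pc$ rectangle $\iff$ $Q$ disconnected. Now assumption $(a)$ together with Theorem~\ref{fresh}(b) says that every connected component of $Q$ is pure; if $Q$ were connected this would make $Q$ itself pure, and Theorem~\ref{fresh}(a) would then force $K[\Pc]$ to be Gorenstein, contradicting the hypothesis. Therefore $Q$ must be disconnected, so $\Pc$ is a rectangle, and the non-Gorenstein hypothesis rules out the two chains having equal length, i.e., $\Pc$ is not a square.

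For $(c) \Rightarrow (b)$, suppose $\Pc = [(0,0),(m,n)]$ with $m \neq n$. Every $2\times 2$ minor of the $(m+1) \times (n+1)$ matrix of variables $(x_{(i,j)})$ is an inner minor of $\Pc$, so $K[\Pc]$ is the classical $2$-minor determinantal ring, equivalently the homogeneous coordinate ring of the affine cone over the Segre embedding $\PP^m \times \PP^n \hookrightarrow \PP^{(m+1)(n+1)-1}$. Since $\PP^m \times \PP^n$ is smooth, the singular locus of $\Spec K[\Pc]$ is exactly the vertex of the cone, giving an isolated singularity; combined with $m \neq n$, this proves $(b)$.

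The hard part is the combinatorial identification of the connected components of $Q$: pinning down the cover relations $H_i < V_j$ precisely enough to conclude that ``$\Pc^*$ rectangle $\iff$ $Q$ disconnected''. Once this is established, the rest of the proof is a direct application of Theorem~\ref{fresh} and the classical fact that the affine cone over a smooth projective variety (here, the Segre variety) is singular only at its vertex.
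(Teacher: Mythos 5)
Your proposal does not address the stated theorem. Theorem~\ref{fresh} is a general statement about the Hibi ring $K[Q]$ of an \emph{arbitrary} finite poset $Q$: part (a) is Hibi's criterion that $K[Q]$ is Gorenstein if and only if $Q$ is pure, and part (b) is the Herzog--Mohammadi--Page criterion that $K[Q]$ is Gorenstein on the punctured spectrum if and only if every connected component of $Q$ is pure. The paper offers no proof of this result; it is quoted from \cite{Hi} and \cite{HMP}. What you have written is instead a proof of Theorem~\ref{late}, the three-way equivalence for $L$-convex polyominoes whose coordinate ring is not Gorenstein. Moreover, your argument explicitly invokes both parts of Theorem~\ref{fresh} (``assumption (a) together with Theorem~\ref{fresh}(b) says that every connected component of $Q$ is pure\dots Theorem~\ref{fresh}(a) would then force $K[\Pc]$ to be Gorenstein''), so as a proof of Theorem~\ref{fresh} it would be circular; and it contains no argument for either direction of (a) or (b) of that theorem --- nothing about the canonical module of a Hibi ring, order-reversing maps, the divisor class group, or the non-Gorenstein locus of a normal semigroup ring, which is what an actual proof would have to engage with.

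For what it is worth, read as a proof of Theorem~\ref{late} your argument is essentially the one the paper gives: the same reduction to the Ferrer diagram $\Pc^*$ and its poset $Q$ of join-irreducible elements, the same observation that $Q$ is disconnected exactly when $\Pc^*$ (equivalently $\Pc$) is a rectangle, and for (c)$\Rightarrow$(b) the classical fact about the singular locus of the generic determinantal ring of $2$-minors, for which the paper cites \cite[Theorem 2.6]{BV} and you give the Segre-cone argument. Your fleshing-out of the cover relations $H_i<V_j$ (no such cover survives when every $V_j$ meets $H_0$, since $H_0$ is excluded from $Q$) is a correct expansion of the paper's one-line remark that a disconnected $Q$ forces $\Pc$ to have no inner corner. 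But none of this constitutes a proof of the quoted theorem about Hibi rings of finite posets.
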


\begin{proof}[Proof of Theorem~\ref{late}]
Since $K[\Pc]\iso K[\Pc^*]$ and since $\Pc$ is a rectangle if and only if $\Pc^*$ is a rectangle, we may assume that $\Pc$ is a Ferrer diagram. 

Let $Q$ be the poset such that $K[Q]\iso K[\Pc]$, and assume that $K[Q]$ is Gorenstein on the punctured spectrum. Then each component of $Q$ is pure, by Theorem~\ref{fresh}(b). Since we assume that $K[Q]$ is not Gorenstein, Theorem~\ref{fresh}(a)  implies that $Q$ is not connected. It follows from the description of $Q$ in terms of its Ferrer diagram $\Pc$ that $\Pc$ has no inner corner. In other words, $\Pc$ is  a rectangle. By Theorem~\ref{gorenstein} it cannot be  a square. This yields (a)\implies (b). The implication (c) \implies (b) follows from \cite[Theorem 2.6]{BV}, and (b)\implies (a) is trivial.
\end{proof}

\section{The Cohen--Macaulay type of $L$-convex polyominoes.}
In this section, we give a general formula for the Cohen--Macaulay type of the coordinate ring of an $L$-convex polyomino. To illustrate our result, we first consider the special case of an $L$-convex polyomino with just two maximal rectangles.

\begin{Proposition}\label{prop:CM2R}
Let $\Pc$ be an $L$-convex polyomino whose maximal rectangles are $R_1$ having size $m \times s$ and $R_2$ having size $t \times n$ with $s<n$ and $t<m$. 
Let $r=\max\{n,m,n+m-(s+t)\}$. Then
\[
\mathrm{type}(K[\Pc])=\begin{cases} \sum\limits_{i=m-t}^{m-(n-s)}\binom{i}{s}\binom{m-i-1}{n-s-1} &\mbox{if } r=m \\ \sum\limits_{i=m-t}^{s}\binom{i-1}{m-t-1}\binom{n-i}{t}  &\mbox{if } r=n \\ \binom{n-s}{t}\binom{m-t}{s}  &\mbox{if } r=n+m-(s+t) \end{cases}
\]
\end{Proposition}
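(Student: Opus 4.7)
My plan is to reduce to the Ferrer diagram case via Theorem~\ref{thm:K[P]}, recognize the coordinate ring as a Hibi ring, and then apply Miyazaki's formula for the Cohen--Macaulay type. First I would use the isomorphism $K[\Pc]\iso K[\Pc^*]$ from Theorem~\ref{thm:K[P]} to assume $\Pc=\Pc^*$ is a Ferrer diagram. The hypothesis that $\Pc$ has exactly two maximal rectangles of sizes $m\times s$ and $t\times n$ with $s<n$ and $t<m$ forces $\Pc^*$ to be the two-step staircase with horizontal projection $(m,\ldots,m,t,\ldots,t)$ ($s$ copies of $m$ followed by $n-s$ copies of $t$) and vertical projection $(n,\ldots,n,s,\ldots,s)$ ($t$ copies of $n$ followed by $m-t$ copies of $s$). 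By the lemma preceding Theorem~\ref{fresh}, $K[\Pc^*]\iso K[Q]$ where $Q$ is the poset of join-irreducible elements of the distributive lattice $\Pc^*$; a direct verification identifies $Q$ as the union of two chains $H_1<\cdots<H_n$ and $V_1<\cdots<V_m$ joined by the single cross cover $H_{n-s}\lessdot V_{t+1}$ coming from the unique inner corner of the staircase.

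Next I would apply Miyazaki's theorem~\cite{Mi}: $\mathrm{type}(K[Q])$ equals the number of minimal strictly order-reversing maps $\eta\colon Q\to\mathbb{Z}_{\geq 1}$. After passing to $\widehat Q=Q\cup\{\hat 0,\hat 1\}$, the longest chain of $\widehat Q$ has length $r+1$ with $r=\max\{n,\ m,\ n+m-(s+t)\}$, which matches the quantity $r$ in the statement. The three cases of the proposition then correspond to the three candidate longest chains of $\widehat Q$: the $V$-chain (length $m$), the $H$-chain (length $n$), and the cross chain $H_1<\cdots<H_{n-s}<V_{t+1}<\cdots<V_m$ (length $n+m-(s+t)$).

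The enumeration then proceeds case by case. When $r=n+m-(s+t)$, the cross chain is strictly longest, so any minimal $\eta$ is rigid along it; the remaining freedom is to interleave the values $\eta(H_{n-s+1}),\ldots,\eta(H_n)$ and $\eta(V_1),\ldots,\eta(V_t)$ inside the two available slack intervals of $\{1,\ldots,r\}$, which a standard lattice-path count delivers as $\binom{n-s}{t}\binom{m-t}{s}$. In the cases $r=m$ and $r=n$ exactly one of the two pure chains is rigid; introducing the parameter $i$ equal to the $\eta$-value at the cross-cover endpoint on the non-rigid side then yields a sum over $i$ with the stated binomial summands $\binom{i}{s}\binom{m-i-1}{n-s-1}$ and $\binom{i-1}{m-t-1}\binom{n-i}{t}$ respectively, and the ranges $m-t\leq i\leq m-(n-s)$ and $m-t\leq i\leq s$ are exactly those for which the two binomial factors are simultaneously positive.

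The hard part will be this final enumeration: carefully translating the minimality condition on strictly order-reversing maps into the claimed binomial identities, handling the boundary constraints at $\hat 0$ and $\hat 1$, and verifying that the displayed ranges of $i$ are exactly those producing a nonzero contribution so that no double-counting or omission occurs across the three cases. The reduction to Ferrer form, the Hibi-ring identification of $Q$, and the computation of $r$ are all essentially routine applications of the results developed earlier in the paper.
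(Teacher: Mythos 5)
Your proposal follows essentially the same route as the paper's proof: reduce to the projected Ferrer diagram, identify $K[\Pc^*]$ with the Hibi ring of the poset $Q$ consisting of the two chains joined by the single cover $H_{n-s}<V_{t+1}$, invoke Miyazaki's criterion to count minimal strictly order-reversing maps on $\widehat Q$, observe that a minimal map is rigid on a longest chain, and then enumerate the placements of the remaining values case by case to obtain the three binomial formulas. The plan, including the identification of $r$ with the maximal chain length and the origin of the summation ranges, matches the paper's argument step for step.
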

\begin{proof}
Let $\Pc^*$ be the Ferrer diagram projected by $\Pc$ and let $Q$ be the poset of the join-irreducible elements associated to $\Pc^*$.
It consists of the two chains $V_{1}<\cdots <V_{m}$ and $H_{1}<\cdots <H_{n}$, and the cover relation $H_{n-s} < V_{t+1}$.
We have $|Q|=m+n$, and $r=\rank Q +1$. We compute the number of minimal generators of the canonical module $\omega_{K[\Pc^*]}$.
For this purpose, let $\widehat{Q}$ be the poset obtained from $Q$ by adding the elements $-\infty $ and $\infty$ with $\infty > p$ and $-\infty < p$ for all $p \in Q$, and let $\MT(\widehat{Q})$ be the set of integer valued functions $\nu: \widehat{Q}\to \mathbb{Z}_{\geq 0}$ with $\nu(\infty) = 0$ and $\nu(p) < \nu(q)$ for all $p > q$. By using a result of Stanley \cite{St}, Hibi shows in \cite[(3.3)]{Hi} that the monomials of the form
\[
y^{\nu(-\infty)} \prod\limits_{p \in Q} x_{p}^{\nu(p)}
\]
for $\nu \in \MT(\widehat{Q})$ form a $K$-basis for $\omega_{K[\Pc^*]}$. 	
By using \cite[Corollary 2.4]{Mi}, we have that the number of generators of $\omega_{K[\Pc^*]}$ is the number of minimal maps $\nu \in \MT(\widehat{Q})$ with respect to the order given in \cite[Page 5]{Mi}. In fact, $\nu \leq \mu$ for $\nu,\mu \in T(\widehat{Q})$ if $\mu-\nu$ is decreasing. 
We observe that the minimal maps $\nu$ necessarily assign the numbers $1,\ldots,r$ to the vertices of a maximal chain of $Q$ in reversed order, hence we have to find the possible values for the remaining $|Q|-r=m+n-r$ elements, depending on $r$. We distinguish three cases:
\begin{itemize}
\item[ (a)] $r=m$;
\item[(b)] $r=n$;
\item[(c)] $r=(n-s)+(m-t)$
\end{itemize}
In the case (a), the maximal chain is  $V_1<\cdots<V_{m}$. Hence we must take $\nu(V_{m-i+1})=i$ for  $i \in \{1,\ldots, m\}$. We have to determine how many vectors $(a_1,\ldots,a_{n})$ with integers entries $0<a_{1}<\cdots<a_{n}$ satisfy $m-t<a_{s+1}<r-(n-s)=m-(n-s)+2$, where the left inequality follows from the cover relation, while the right inequality follows from the fact that $a_{s+2}<\cdots<a_{n}<m+1$ are determined. Therefore, fixed $i=a_{s+1}$, there are $\binom{i-1}{s}$ ways to choose the values $a_{1},\ldots,a_{s}$ in the range $\{1,\ldots,i-1\}$. Moreover, there are $\binom{m-i}{n-s-1}$ ways to choose $a_{s+2},\ldots, a_{n}$ in the range $\{i+1,\ldots, m\}$. Hence we conclude
\[
\mathrm{type}(K[\Pc])=\sum\limits_{i=m-t+1}^{m-(n-s)+1}\binom{i-1}{s}\binom{m-i}{n-s-1}=\sum\limits_{i=m-t}^{m-(n-s)}\binom{i}{s}\binom{m-i-1}{n-s-1}.
\]
In the case (b), we assign to each element of the chain  $H_1,\ldots,H_{n}$ a number in $\{1,\ldots, n\}$ in strictly decreasing order. We have to determine how many vectors $(b_1,\ldots,b_{m})$ with integers entries $0<b_{1}<\cdots<b_{m}$ satisfy $m-t-1<b_{m-t}<s+1$, where the left inequality follows from the fact that $0<b_{1}<\cdots<b_{m-t-1}$, while the rightmost inequality follows from the cover relation. Therefore, fixed $i=b_{m-t}$, there are $\binom{i-1}{m-t-1}$ ways to choose the values $b_{1},\ldots,b_{m-t-1}$ in the range $\{1,\ldots,i-1\}$. Moreover, there are $\binom{n-i}{t}$ ways to choose $b_{m-t+1},\ldots, b_{m}$ in the range $\{i+1,\ldots, n\}$. Hence we conclude
\[
\mathrm{type}(K[\Pc])=\sum\limits_{i=m-t}^{s}\binom{i-1}{m-t-1}\binom{n-i}{t}.
\]

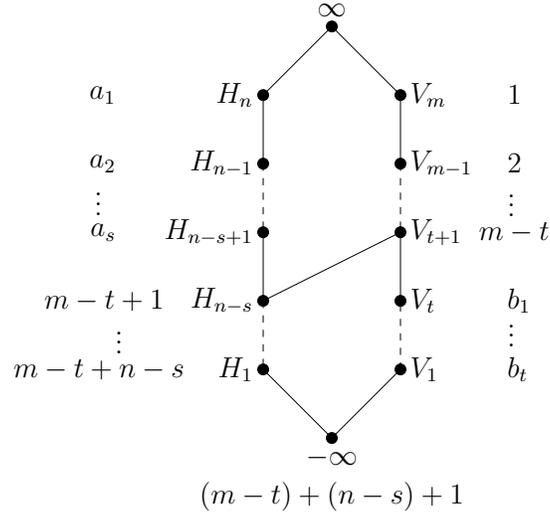
\begin{figure}[H]
 \centering
  \resizebox{0.5\textwidth}{!}{
  \begin{tikzpicture}
 
\draw (0,4)--(0,5);
\draw[dashed] (0,4)--(0,3);
\draw (0,2)--(0,3);
\draw[dashed] (0,2)--(0,1);
\draw (2,4)--(2,5);
\draw[dashed] (2,4)--(2,3);
\draw (2,2)--(2,3);
\draw[dashed] (2,1)--(2,2);

\draw (1,6)--(0,5);
\draw (1,6)--(2,5);
\draw (1,0)--(0,1);
\draw (1,0)--(2,1);
\draw(2,3)--(0,2);

\filldraw (1,6) circle (1.5*\rad) node[anchor=south] {$\infty$}; 
\filldraw (1,0) circle (1.5*\rad) node[anchor=north] {$-\infty$}; 
\filldraw (0,1)  circle (1.5*\rad) node[anchor=east] {$H_1$}; 
\filldraw (0,2)  circle (1.5*\rad) node[anchor=east] {$H_{n-s}$}; 
\filldraw (0,3)  circle (1.5*\rad) node[anchor=east] {$H_{n-s+1}$};
\filldraw (0,4)  circle (1.5*\rad) node[anchor=east] {$H_{n-1}$}; 
\filldraw (0,5)  circle (1.5*\rad) node[anchor=east] {$H_n$};
\filldraw (2,1)  circle (1.5*\rad) node[anchor=west] {$V_1$}; 
\filldraw (2,2)  circle (1.5*\rad) node[anchor=west] {$V_t$}; 
\filldraw (2,3)  circle (1.5*\rad) node[anchor=west] {$V_{t+1}$}; 
\filldraw (2,4)  circle (1.5*\rad) node[anchor=west] {$V_{m-1}$}; 
\filldraw (2,5)  circle (1.5*\rad) node[anchor=west] {$V_m $}; 

\filldraw (1,-0.5) circle (0*\rad) node[anchor=north] {$(m-t)+(n-s)+1$};
\filldraw (3,5) circle (0*\rad) node[anchor=west] {$\ \ \ 1$};
\filldraw (3,4) circle (0*\rad) node[anchor=west] {$\ \ \ 2$};
\filldraw (3,3.5) circle (0*\rad) node[anchor=west] {$\ \ \ \vdots$};
\filldraw (3,3) circle (0*\rad) node[anchor=west] {$m-t$};
\filldraw (-1.3,2)  circle (0*\rad) node[anchor=east] {$m-t+1$}; 
\filldraw (-1.9,1.5)  circle (0*\rad) node[anchor=east] {$\vdots$}; 
\filldraw (-1,1)  circle (0*\rad) node[anchor=east] {$m-t+n-s$}; 

\filldraw (-2,3)  circle (0*\rad) node[anchor=east] {$a_s$};
\filldraw (-2.2,3.52) circle (0*\rad) node[anchor=east] {$\vdots$};
\filldraw (-2,4)  circle (0*\rad) node[anchor=east] {$a_2$}; 
\filldraw (-2,5)  circle (0*\rad) node[anchor=east] {$a_1$};
\filldraw (3,2) circle (0*\rad) node[anchor=west] {$\ \ \ b_1$};
\filldraw (3,1.6) circle (0*\rad) node[anchor=west] {$\ \ \ \vdots$};
\filldraw (3,1) circle (0*\rad) node[anchor=west] {$\ \ \ b_t$};

   \end{tikzpicture}}\ \\
\caption{We count the number of minimal maps assigning $1~<~\cdots~ <m-t+n-s $ to $V_{m}>\cdots >V_{t+1}> H_{n-s} >\cdots >H_1$.}\label{fig:pos}
\end{figure}

In the case (c), we assign to each element of the chain  $H_1,\ldots,H_{n-s},V_{t+1},\ldots, V_{m}$ a number in $\{1,\ldots, (m+n)-(s+t)\}$ in strictly decreasing order. We have to determine how many vectors $(a_1,\ldots,a_{s},b_{1},\ldots b_{t})$ with integers entries $0<a_{1}<\cdots<a_{s}<b_{1}<\cdots<b_{t}$ satisfy $a_{s}<m-t+1$ and $b_{1}>m-t$ (see Figure \ref{fig:pos}). Therefore, there are $\binom{m-t}{s}$ ways to choose the values $a_{1},\ldots,a_{s}$ in the range $\{1,\ldots,m-t\}$ and there are $\binom{n-s}{t}$ ways to choose $b_{1},\ldots, b_{t}$ in the range $\{m-t+1,\ldots, m-t+n-s\}$. Hence in this case, we conclude
\[
\mathrm{type}(K[\Pc])=\binom{n-s}{t}\binom{m-t}{s}.
\]
\end{proof}

Now we consider the general case.

\begin{Theorem}\label{thm:type}
Let $\Pc$ be an $L$-convex polyomino whose maximal rectangles are $\{R_i\}_{i=1,\ldots,t}$. For $i=1,\ldots ,t$, let $c_{i} \times d_{i}$ be the size of $R_i$ with $d_{1}=n$ and $c_{t}=m$ and $c_{i}<c_{j}$ and $d_{i}>d_{j}$ for $i<j$. Let $r=\max\{n,m,\{n+m-(c_{i}+d_{i+1})\}_{i=1,\ldots,t-1}\}$. Then
\[
\mathrm{type}(K[\Pc])=\begin{cases} A &\mbox{if } r=m \\ B &\mbox{if } r=n \\ A_{h}B_{h} &\mbox{if } r=n+m-(c_{h}+d_{h+1}) \end{cases}
\]
where
\[
A=\sum\limits_{i_1,\ldots,i_{t-1}} \binom{i_1-1}{d_{t}} \prod\limits_{k=2}^{t-1} \binom{i_k-i_{k-1}-1}{d_{t-k+1}-d_{t-k+2}-1} \binom{m-i_{t-1}}{n-d_2-1}
\]
with 
\[
m-c_{t-j}+1\leq i_{j} \leq m-(n-d_{t+1-j})+1  \ \emph{  for  } \  1\leq j \leq t-1,
\]
and
\[
B=\sum\limits_{i_1,\ldots,i_{t-1}} \binom{i_1-1}{m-c_{t-1}-1} \prod\limits_{k=2}^{t-1} \binom{i_k-i_{k-1}-1}{c_{t-k+1}-c_{t-k}-1} \binom{m-i_{t-1}}{c_1}
\]
with $m-c_{t-1}\leq i_{1} \leq d_{t}$ and
\[
i_{j-1}+c_{t-j+1}-c_{t-j}\leq i_{j} \leq d_{t-j+1}  \ \emph{  for  } \  2\leq j \leq t-1.
\]
Moreover,
\begin{eqnarray*}
&A_{h}& = \sum\limits_{i_{1},\ldots, i_{t-h-1}} \binom{i_1-1}{d_{t}}\prod\limits_{k=2}^{t-h-1} \binom{i_{k}-i_{k-1}-1}{d_{t-k+1}-d_{t-k+2}-1} \binom{ m-c_{h}-i_{t-h-1}}{d_{h+1}-d_{h+2}-1}
\end{eqnarray*}
with
\[
m-c_{t-j}+1\leq i_{j} \leq m-c_{h}-(d_{h+1}-d_{t-j+1})+1  \ \emph{  for  } \  1\leq j \leq t-h-1.
\]
for $h=1,\ldots,t-2$ and $A_{t-1}=\binom{m-c_{t-1}}{d_t}$, and
\begin{eqnarray*}
B_{h}&=&\sum\limits_{i_1,\ldots,i_{h-1}} \binom{i_1-1}{c_{h}-c_{h-1}-1} \prod\limits_{k=2}^{h-1}\binom{i_{k}-i_{k-1}-1}{c_{h-k+1}-c_{h-k}-1} \\ &&\binom{m-c_{h}+n-d_{h+1}-i_{h-1}}{c_1}
\end{eqnarray*}
with $m-c_{h-1}\leq i_{1} \leq m-c_{h}+(d_{h}-d_{h+1})$ and
\[
i_{j-1}+(c_{h-j+1}-c_{h-j})\leq i_{j} \leq m-c_{h}+(d_{h-j+1}-d_{h+1})  \ \emph{  for  } \  2\leq j \leq h-1, 
\]
for $h=2,\ldots,t-1$ and $B_{1}=\binom{n-d_{2}}{c_1}$.
\end{Theorem}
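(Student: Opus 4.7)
The plan is to mirror and extend the argument of Proposition~\ref{prop:CM2R}. By Theorem~\ref{thm:K[P]} we may replace $\Pc$ by its projected Ferrer diagram $\Pc^*$ and compute the type of $K[\Pc^*]$. Let $Q$ be the poset of join-irreducible elements of $\Pc^*$, so that $K[\Pc^*]\iso K[Q]$. Then $Q$ consists of the two chains $V_1 < \cdots < V_m$ and $H_1 < \cdots < H_n$, together with the $t-1$ cover relations $H_{n-d_{i+1}} < V_{c_i+1}$ for $i=1,\ldots,t-1$ coming from the corners of $\Pc^*$ between consecutive maximal rectangles $R_i$ and $R_{i+1}$. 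As in the proof of Proposition~\ref{prop:CM2R}, by the results of Stanley~\cite{St}, Hibi~\cite{Hi} and Miyazaki~\cite{Mi}, the type of $K[\Pc^*]$ equals the number of minimal elements of the poset $\MT(\widehat{Q})$ of strictly order-reversing integer-valued maps $\nu$ with $\nu(\infty)=0$.

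Next I would enumerate the maximal chains of $\widehat{Q}$. Every maximal chain between $-\infty$ and $\infty$ is obtained by possibly switching from the $H$-chain to the $V$-chain across exactly one of the $t-1$ cover relations. Hence the maximal chains are: the pure $V$-chain of length $m+1$; the pure $H$-chain of length $n+1$; and for each $h\in\{1,\ldots,t-1\}$ the hybrid chain $-\infty<H_1<\cdots<H_{n-d_{h+1}}<V_{c_h+1}<\cdots<V_m<\infty$ of length $n+m-(c_h+d_{h+1})+1$. Thus $r=\rank Q+1$ is realized by one of these, giving precisely the trichotomy in the statement.

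In each case a minimal $\nu$ is determined by assigning $1,2,\ldots,r$ in reverse order along some longest chain, and then distributing strictly increasing positive integer values among the remaining $m+n-r$ elements, subject to the constraints imposed by the remaining cover relations. When $r=m$ the free elements are $H_1,\ldots,H_n$, constrained by the cover relations $H_{n-d_{i+1}}<V_{c_i+1}$; parametrising by $i_j:=\nu(H_{n-d_{t+1-j}})$ with $m-c_{t-j}+1\le i_j\le m-(n-d_{t+1-j})+1$ and then freely choosing the strictly increasing $H$-values within each intervening gap yields the sum of products of binomials equal to $A$. The case $r=n$ is entirely symmetric by exchanging the roles of the two chains, giving $B$. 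When $r=n+m-(c_h+d_{h+1})$ the free elements split into the $H$-block $\{H_{n-d_{h+1}+1},\ldots,H_n\}$ and the $V$-block $\{V_1,\ldots,V_{c_h}\}$; the cover relations with index $i<h$ involve only elements of the $V$-block below the switch, while those with index $i>h$ involve only elements of the $H$-block above it, so the two counts decouple as a product. Applying the $r=m$ logic to the $H$-block and the $r=n$ logic to the $V$-block produces $A_h$ and $B_h$ respectively.

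The main obstacle is the last step: faithfully translating each chain of cover relations into the chain of inequalities bounding the free parameters $i_1<\cdots<i_{t-1}$, and verifying by careful bookkeeping that these ranges reproduce exactly the summation ranges stated. The indexing subtleties (in particular, keeping straight the reversed order along the chosen maximal chain and the off-by-one adjustments coming from $H_{n-d_{i+1}}<V_{c_i+1}$) are what make the formulas look intricate; but each individual binomial factor in $A$, $B$, $A_h$, $B_h$ is transparent once one recognises it as the number of ways of placing strictly increasing integer values in an interval whose endpoints are dictated by two adjacent cover relations, so the verification is essentially an iterated book-keeping extension of the three-case analysis carried out in Proposition~\ref{prop:CM2R}.
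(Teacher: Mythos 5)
Your proposal is correct and follows essentially the same route as the paper: reduce to the Ferrer diagram $\Pc^*$, identify $K[\Pc^*]$ with the Hibi ring of the poset $Q$ with cover relations $H_{n-d_{i+1}}<V_{c_i+1}$, count minimal strictly order-reversing maps via Stanley--Hibi--Miyazaki, split into the three cases according to which maximal chain realizes $r$, and in the hybrid case observe that the remaining constraints decouple into the product $A_hB_h$. The parametrization $i_j=\nu(H_{n-d_{t-j+1}})$ and the resulting ranges you indicate agree with the paper's bookkeeping.
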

\begin{proof}
Firstly observe that in the general case the cover relations are $H_{n-d_{i+1}}~<~V_{c_{i}+1}$ for $i=1,\ldots ,t-1$.
We just generalize the ideas of Proposition \ref{prop:CM2R}. We distinguish three cases:
\begin{itemize}
\item[(a)] $r=m$;
\item[(b)] $r=n$;
\item[(c)] $r=(n-d_{h+1})+(m-{c_{h}})$ for some $k=1,\ldots,t-1$.
\end{itemize}
In the case (a), we assign to each element of the chain  $V_1,\cdots,V_{m}$ a number in $\{1,\ldots, m\}$ in decreasing order. We have to determine how many vectors $(a_1,\ldots,a_{n})$ with integers entries $0<a_{1}<\cdots<a_{n}<m+1$ satisfy 
\[
m-c_{t-k} < a_{d_{t-k+1}+1} < m-(n-d_{t-k+1})+2 \ \ \ \mbox{for } k=1,\ldots,t-1,
\]
where the left inequality follows from the cover relations, while the right inequality follows from the fact that $a_{d_{t-k+1}+2}<a_{d_{t-k+1}+3}<\cdots<a_{n}<m+1$ . Therefore, fixed $i_1=a_{d_{t}+1}$  there are $\binom{i_1-1}{d_{t}}$ ways to choose the values $a_{1},\ldots,a_{d_{t}}$ in the range $\{1,\ldots,i_1-1\}$. Moreover, for $2\leq k \leq t-1$ and fixed $i_{k}=a_{d_{t-k+1}+1}$, there are $\binom{i_{k}-i_{k-1}-1}{d_{t-k+1}-d_{t-k+2}-1}$ ways to choose the values $a_{d_{t+k+2}+2},\ldots,a_{d_{t+k+1}}$ in the range $\{i_{k-1}+1,\ldots,i_{k}-1\}$. Finally, there are $\binom{m-i_{t-1}}{n-d_{2}-1}$ ways to choose $a_{d_{2}+2},\ldots, a_{n}$ in the range $\{i_{t-1}+1,\ldots, m\}$. Hence in this case, we conclude that $\mathrm{type}(K[\Pc])$ is $A$.\\
In the case (b), we assign to each element of the chain  $H_1,\ldots,H_{n}$ a number in $\{1,\ldots, n\}$ in decreasing order. We have to determine how many vectors $(b_1,\ldots,b_{m})$ with integers entries $0<b_{1}<\cdots<b_{m}$ satisfy 
\[
m-c_{t-1}-1 < b_{m-c_{t-1}} < d_{t}+1
\]
and
\[
b_{m-c_{t-k+1}}+(c_{t-k+1}-c_{t-k})-1 < b_{m-c_{t-k}} < d_{t-k+1}+1 \mbox{   for } k=2,\ldots,t-1,
\]
where the left inequalities follow from the fact that $b_{m-c_{t-k+1}+1}<\cdots<b_{m-c_{t-k}-1}$, while the right inequalities follow from the cover relations.
Therefore for fixed $i_1=b_{m-c_{t-1}}$  there are $\binom{i_1-1}{m-c_{t-1}-1}$ ways to choose the values $b_{1},\ldots,b_{m-c_{t-1}-1}$ in the range $\{1,\ldots,i_1-1\}$. Moreover, for $2\leq k \leq t-1$ and fixed $i_{k}= b_{m-c_{t-k}}$, there are $\binom{i_{k}-i_{k-1}-1}{c_{t-k+1}-c_{t-k}-1}$ ways to choose the values $b_{m-c_{t-k+1}+1},\ldots,b_{m-c_{t-k}-1}$ in the range $\{i_{k-1}+1,\ldots,i_{k}-1\}$. Finally, there are $\binom{n-i_{t-1}}{c_1}$ ways to choose $b_{m-c_1+1},\ldots, b_{m}$ in the range $\{i_{t-1}+1,\ldots, n\}$. Hence in this case, we conclude that $\mathrm{type}(K[\Pc])$ is $B$.\\
In the case (c), fix $h \in 1,\ldots,t-1$ we assign to each element of the chain  $H_1,\ldots$, $H_{n-d_{h+1}}$, $V_{c_{h}+1},\ldots, V_{m}$ a number in $\{1,\ldots, (m+n)-(c_{h}+d_{h+1})\}$ in decreasing order.  Let $\widetilde{m}=m-c_{h}$, $\widetilde{n}=n-d_{h+1}$.
We have to determine how many vectors $(a_1,\ldots,a_{d_{h+1}},b_{1},\ldots b_{c_{h}})$  with integers entries $0<a_{1}<\cdots<a_{d_{h+1}}<b_{1}<\cdots<b_{c_{h}}$ satisfy
\[
m-c_{t-k} < a_{d_{t-k+1}+1} < \widetilde{m}-(d_{h+1}-d_{t-k+1})+2 \mbox{\ \ \ for } k=1,\ldots,t-h-1
\]
\[
m-c_{h-1}-1<b_{c_{h}-c_{h-1}}<\widetilde{m}+(d_{h}-d_{h+1})+1,
\]
\[
b_{c_{h}-c_{h-k+1}}+(c_{h-k+1}-c_{h-k})-1< b_{c_h-c_{h-k}} < \widetilde{m}+(d_{h-k+1}-d_{h+1})+1 \mbox{   for } k=2,\ldots,h-1.
\]
For fixed $i_1=a_{d_{t}+1}$ there are $\binom{i_1-1}{d_{t}}$ ways to choose the values $a_{1},\ldots,a_{d_{t}}$ in the range $\{1,\ldots,i_1-1\}$. Moreover, for $2\leq k \leq t-h-1$ and fixed $i_{k}=a_{d_{t-k+1}+1}$, there are $\binom{i_{k}-i_{k-1}-1}{d_{t-k+1}-d_{t-k+2}-1}$ ways to choose the values $a_{d_{t+k+2}+2},\ldots,a_{d_{t+k+1}}$ in the range $\{i_{k-1}+1,\ldots,i_{k}-1\}$.  Furthermore, there are $\binom{\widetilde{m}-i_{t-h-1}}{d_{h+1}-d_{h+2}-1}$ ways to choose $a_{d_{h+2}+2},\ldots, a_{d_{h+1}}$ in the range $\{i_{t-h-1}+1,\ldots, \widetilde{m}\}$. \\
For fixed $j_1=b_{c_{h}-c_{h-1}}$  there are $\binom{j_1-1}{c_{h}-c_{h-1}-1}$ ways to choose the values $b_{1},\ldots,b_{c_{h}-c_{h-1}-1}$ in the range $\{1,\ldots,j_1-1\}$. Moreover, for $2\leq k \leq h$ and fixed $j_{k}= b_{c_h-c_{h-k}}$ there are $\binom{j_{k}-j_{k-1}-1}{c_{h-k+1}-c_{h-k}-1}$ ways to choose the values $b_{c_{h}-c_{h-k+1}+1}$, $\ldots$, $b_{c_{h}-c_{h-k}-1}$ in the range $\{j_{k-1}+1,\ldots,j_{k}-1\}$. Finally, there are $\binom{\widetilde{m}+\widetilde{n}-j_{h-1}}{c_1}$ ways to choose $b_{c_{h}-c_1+1},\ldots, b_{c_{h}}$ in the range $\{j_{h-1}+1,\ldots, \widetilde{m}+\widetilde{n}\}$.
Hence in this case, we conclude that $\mathrm{type}(K[\Pc])$ is $A_{h} \cdot B_{h}$.
Observe that the formula for the $a_{i}$ makes sense only if $1 \leq h\leq t-2$. For $h=t-1$, we have to choose the numbers
\[
a_{1},\ldots,a_{d_{t}}
\]
among the values $\{1,\ldots, m-c_{t-1}\}$, hence $A_{t-1}=\binom{m-c_{t-1}}{d_t}$. Furthermore observe that the formula for the $b_{i}$ makes sense only if $2 \leq h\leq t-1$. For $h=1$, we have to choose the numbers
\[
b_{1},\ldots,b_{c_1}
\]
among the values $\{m-c_{1}+1,\ldots, (m-c_{1})+(n-d_{2}) \}$, hence $B_{1}=\binom{n-d_{2}}{c_1}$. 
\end{proof}

We observe that the conditions of Theorem \ref{gorenstein} also arise from the above formula. 
The following example demonstrates Theorem \ref{thm:type}.
\begin{Example}
Let $\Pc$ be the Ferrer diagram in Figure \ref{fig:Ex}. 
\begin{figure}[H]
   \centering
  \resizebox{0.4\textwidth}{!}{
  \begin{tikzpicture}

\draw (0,4)--(5,4);
\draw(0,3)-- (5,3);
\draw(0,2)-- (3,2);
\draw(0,1)-- (2,1);
\draw(0,0)-- (1,0);

\draw (0,0)--(0,4);
\draw (1,0)--(1,4);
\draw (2,1)--(2,4);
\draw (3,2)--(3,4);
\draw (4,3)-- (4,4);
\draw (5,3)-- (5,4);


\filldraw (1,0)  circle (1.5*\rad) node[anchor=north] {$V_1$}; 
\filldraw (2,1)  circle (1.5*\rad) node[anchor=north] {$V_2$}; 
\filldraw (3,2)  circle (1.5*\rad) node[anchor=north] {$V_3$}; 
\filldraw (4,3)  circle (1.5*\rad) node[anchor=north] {$V_4$}; 
\filldraw (5,3)  circle (1.5*\rad) node[anchor=north] {$V_5$}; 
\filldraw (0,1)  circle (1.5*\rad) node[anchor=east] {$H_1$}; 
\filldraw (0,2)  circle (1.5*\rad) node[anchor=east] {$H_2$}; 
\filldraw (0,3)  circle (1.5*\rad) node[anchor=east] {$H_3$}; 
\filldraw (0,4)  circle (1.5*\rad) node[anchor=east] {$H_4$}; 

   \end{tikzpicture}}
   \caption{}\label{fig:Ex}
\end{figure}
We have $t=4$ maximal rectangles whose sizes are $\{c_i\times d_i\}_{i=1,\ldots,4}$ with
\begin{table}[H]
\begin{tabular}{cccc}
$c_{1}=1$ &$c_{2}=2$ &$c_{3}=3$ &$c_{4}=m=5$ \\
$d_{1}=n=4$ &$d_{2}=3$ &$d_{3}=2$ &$d_{4}=1.$
\end{tabular}
\end{table}
There are $4$ maximal chains in the poset $Q$ corresponding to $\Pc$ containing $5$ vertices. For example, the chain $V_{1},\ldots,V_{5}$ and the chain $H_{1},H_{2},V_{3},V_{4},V_{5}$, that correspond to the cases $r=m$ and $r=(n-d_{3})+(m-c_{2})$, hence $h=2$.
We are going to compute $A$ and $A_{2}B_{2}$ as in Theorem \ref{thm:type}.
We have
\[
A=\sum\limits_{i_1=3}^3\sum\limits_{i_2=4}^4 \sum\limits_{i_3=5}^5  \binom{i_1-1}{1} \binom{i_2-i_{1}-1}{2-1-1} \binom{i_3-i_{2}-1}{3-2-1} \binom{5-i_3}{4-3-1}=2,
\]
while
\[
A_{2}=\sum\limits_{i_1=3}^3 \binom{i_1-1}{1}\binom{5-2-i_1}{2-1-1}=2
\]
and
\[
B_{2}=\sum\limits_{i_1=4}^4 \binom{i_1-1}{2-1-1} \binom{5-i_1}{1}=1,
\]
yielding
\[
A_{2}B_{2}=2.
\]
\end{Example}

\end{document}